\newtheorem{theorem}{Theorem}[section]
\newtheorem{lemma}[theorem]{Lemma}
\newtheorem{prop}[theorem]{Proposition}
\theoremstyle{definition}
\theoremstyle{remark}
\newtheorem{remark}[theorem]{Remark}
\numberwithin{equation}{section}
\newcommand{\R}{\mathbb R}
\newcommand{\rn}{{{\mathbb R}^n}}
\newcommand{\RR}{\mathbb{R}}
\DeclareMathOperator{\supp}{supp}
\DeclareMathOperator*{\essinf}{ess\,inf}
\DeclareMathOperator*{\esssup}{ess\,sup}
\DeclareRobustCommand{\rchi}{{\mathpalette\irchi\relax}}
\newcommand{\irchi}[2]{\raisebox{\depth}{$#1\chi$}}
\newcommand{\avg}[1]{\langle #1 \rangle}
\newcommand{\D}{\mathcal D}
\newcommand{\Ss}{\mathcal S}
\newcommand{\op}[1]{| #1 |_{\mathrm{op}}}
\newcommand{\A}{\mathcal A}
\newcommand{\W}{\mathcal W}
\newcommand{\V}{\mathcal V}
\newcommand{\Asc}{\mathrm{sc}}
\def\Xint#1{\mathchoice
   {\XXint\displaystyle\textstyle{#1}}%
   {\XXint\textstyle\scriptstyle{#1}}%
   {\XXint\scriptstyle\scriptscriptstyle{#1}}%
   {\XXint\scriptscriptstyle\scriptscriptstyle{#1}}%
   \!\int}
\def\XXint#1#2#3{{\setbox0=\hbox{$#1{#2#3}{\int}$}
     \vcenter{\hbox{$#2#3$}}\kern-.5\wd0}}
\def\avgint{\Xint-}
\begin{document}

\title[Weighted weak-type inequalities]
{Weighted weak-type inequalities for maximal operators and singular integrals}

\author{David Cruz-Uribe, OFS}
\address{Dept. of Mathematics \\
University of Alabama \\
 Tuscaloosa, AL 35487, USA}
\email{dcruzuribe@ua.edu}

\author{Brandon Sweeting}
\address{Dept. of Mathematics \\
University of Alabama \\
 Tuscaloosa, AL 35487, USA}
\email{bssweeting@ua.edu}

\subjclass[2010]{Primary 42B20, 42B25, 42B35}

\date{November 1, 2023}

\thanks{The first author is partially supported by a Simons Foundation
  Travel Support for Mathematicians Grant.}

\begin{abstract}
  We prove quantitative, one-weight, weak-type estimates for maximal
  operators, singular integrals, fractional maximal operators and
  fractional integral operators.  We consider a kind of weak-type
  inequality that was first studied by Muckenhoupt and
  Wheeden~\cite{MR447956} and later in~\cite{MR2172941}.  We
  obtain quantitative estimates for these operators in both the scalar
  and matrix weighted setting  using
  sparse domination techniques.  Our results extend those obtained by Cruz-Uribe,
  Isralowitz, Moen, Pott, and Rivera-Ríos~\cite{MR4269407} for singular
  integrals and maximal operators when $p=1$.
\end{abstract}

\maketitle

\section{Introduction}

In this paper we consider weighted weak-type inequalities in a form
first introduced by Muckenhoupt and Wheeden.  To state our results we
first review some history; for conciseness we defer the statement of
relevant definitions to Section~\ref{section:prelim}. 

Typically, for $1\leq p<\infty$, given a weight $w$, a weak $(p,p)$
inequality for an operator $T$ refers to an inequality of the form
\begin{equation} \label{eqn:weak-measure}
w(\{x \in \rn : |Tf(x)|> \lambda\})
  \leq \frac{C}{\lambda^p} \int_\rn |f|^pw\,dx. 
\end{equation}
Such inequalities follow from the corresponding strong $(p,p)$
inequalities
\[ \int_\rn |Tf|^pw\,dx \leq C\int_\rn |f|^pw\,dx \]
by Chebyshev's inequality, treating $w$ as a measure.  However, if we
treat $w$ as a multiplier, and rewrite the strong-type inequality by
replacing $f$ by $fw^{-\frac{1}{p}}$ we get a strong-type inequality of the
form
\[ \int_\rn |w^{\frac{1}{p}}T(fw^{-\frac{1}{p}})|^p\,dx \leq C\int_\rn |f|^p\,dx. \]
Again by Chebyshev's inequality, this implies a weak-type inequality
of the form
\begin{equation} \label{eqn:weak-mult}
 |\{x \in \rn : |w(x)^{\frac{1}{p}}T(fw^{-\frac{1}{p}})(x)|> \lambda\}|
      \leq \frac{C}{\lambda^p} \int_\rn |f|^p\,dx. 
    \end{equation}
    These inequalities were first considered by Muckenhoupt and Wheeden
    in~\cite{MR447956}: when $n=1$ they proved that they hold for
    $1\leq p<\infty$ for the Hardy-Littlewood maximal operator and the
    Hilbert transform provided $w$ is in the Muckenhoupt class $A_p$.
    Their work was extended to higher dimensions and all singular
    integrals  by the first author, Martell and P\'erez~\cite{MR2172941}.  To
    distinguish this kind of inequality from~\eqref{eqn:weak-measure},
    we will refer to inequalities like~\eqref{eqn:weak-mult} as {\em
      multiplier weak-type
    inequalities}.

  \begin{remark}
    Two-weight generalizations of multiplier weak-type inequalities were introduced by
Sawyer~\cite{MR776188} when $n=1$.  His results were extended to higher
dimensions in~\cite{MR2172941},
and have since been considered by a number of
authors:  see, for
example,~\cite{MR3961329,MR3850676,MR3498179,MR4002540}.
\end{remark}

Recently, there has been a renewed interest in multiplier weak-type
inequalities.  In~\cite{MR4269407}, the first author, Isralowitz,
Moen, Pott, and Rivera-R\'\i os  showed that they are
the correct approach to generalize weak-type inequalities to matrix
$A_p$ weights. They proved the following quantitative estimates for singular
integrals and the Christ-Goldberg maximal operator.

\begin{theorem} \label{thm:p=1-matrix}
  Given a matrix weight $W\in \mathcal{A}_1$ and a Calder\'on-Zygmund singular integral
  operator $T$, then for every $f\in L^1(\rn,\RR^d)$ and $\lambda>0$,
  \[ |\{ x\in \rn : |W(x)T(W^{-1}f)(x)|> \lambda \}|
    \leq C(n,p,T)[W]_{\mathcal{A}_1}[W]_{A_\infty^\Asc}\frac{1}{\lambda}
    \int_\rn|f|\,dx. \]
  Similarly, for the Christ-Goldberg maximal operator $M_W$ we have
  \[ |\{ x\in \rn : M_Wf(x)> \lambda \}|
    \leq C(n,p)[W]_{\mathcal{A}_1}[W]_{A_\infty^\Asc}\frac{1}{\lambda}
    \int_\rn|f|\,dx. \]
\end{theorem}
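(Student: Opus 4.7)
The plan is to reduce the result to a weak-type $(1,1)$ estimate with respect to Lebesgue measure for a matrix sparse operator, which can then be handled by a Calderón--Zygmund decomposition.

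First, I would apply the sparse domination theorem for matrix-weighted Calderón--Zygmund operators (Nazarov--Petermichl--Treil--Volberg type): for every $f$ there exists a sparse family $\Ss$ such that
\[
|W(x)T(W^{-1}f)(x)| \leq C(n,T) \sum_{Q\in\Ss} |W(x) V_Q| \, \avg{|V_Q^{-1} f|}_Q \rchi_Q(x),
\]
where $V_Q$ is a reducing matrix for $\avg{W^{-1}}_Q$ (capturing the geometry of $W^{-1}$ on $Q$ by a constant matrix). The Christ--Goldberg maximal operator $M_W$ admits an analogous and simpler pointwise sparse bound directly from its definition, so the two claims reduce to the same type of estimate.

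To prove the weak $(1,1)$ bound for the sparse form, perform a Calderón--Zygmund decomposition of $|f|$ at height $\lambda$, producing a pairwise disjoint family $\{Q_j\}$ with $\Omega := \bigcup_j Q_j$ satisfying $|\Omega| \lesssim \lambda^{-1}\|f\|_{L^1(\rn)}$ and $|f|\le \lambda$ a.e.\ on $\rn\setminus\Omega$. Since $|\Omega|$ is already of the correct size, it suffices to control the sparse sum restricted to cubes $Q\not\subset\Omega$ on the good set $\rn\setminus\Omega$. Following the scheme used in~\cite{MR4269407}, fix a cube $Q\in\Ss$ and regard the scalar function $\varphi_Q(x):=|W(x)V_Q|$: the matrix $\mathcal{A}_1$ hypothesis forces $\avg{\varphi_Q}_Q \lesssim [W]_{\mathcal{A}_1}$, turning the sparse form into a scalar sparse operator weighted by $\varphi_Q$. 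Applying Chebyshev at the $L^2$ level and using the standard Carleson/sparse $L^2$ estimate, together with the scalar sharp reverse Hölder inequality for $\varphi_Q$ (which contributes the factor $[W]_{A_\infty^{\Asc}}$), yields the quantitative weak bound with constant $[W]_{\mathcal{A}_1}[W]_{A_\infty^{\Asc}}$.

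The main obstacle is the non-commutativity of matrix products: one cannot simply peel off a scalar weight and apply known scalar $A_1$--$A_\infty$ estimates. This is overcome by introducing the reducing matrices $V_Q$ and isolating the scalar factor $\varphi_Q$, which inherits an $A_1$-type behavior from the matrix condition while permitting scalar sharp reverse Hölder self-improvement. It is precisely this factorization that produces the multiplicative constant $[W]_{\mathcal{A}_1}[W]_{A_\infty^{\Asc}}$ rather than $[W]_{\mathcal{A}_1}^{2}$, and the same scheme (without the singular integral sparse domination step) delivers the bound for $M_W$.
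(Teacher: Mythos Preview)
Your overall architecture---sparse domination with reducing matrices, Calder\'on--Zygmund decomposition, and sharp reverse H\"older---is the right one, and matches the approach of~\cite{MR4269407} (which this paper cites for Theorem~\ref{thm:p=1-matrix} and then adapts to $p>1$ in the proof of Theorem~\ref{thm:sio-matrix}). However, two steps are genuinely off. First, the reducing-matrix setup and the placement of the $[W]_{\A_1}$ factor are incorrect. One takes $\W_Q$ to be the reducing matrix for $W$, so that $|\W_Q v|\approx\avgint_Q|W(y)v|\,dy$, and the convex-body sparse bound reads $|W(x)T(W^{-1}f)(x)|\lesssim\sum_{Q\in\Ss}\op{W(x)\W_Q^{-1}}\,\avg{|\W_Q W^{-1}f|}_Q\,\rchi_Q(x)$. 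The factor $[W]_{\A_1}$ does \emph{not} arise from $\avg{\varphi_Q}_Q$ with $\varphi_Q=\op{W\,\W_Q^{-1}}$: by the very definition of the reducing operator, $\avg{\op{W\,\W_Q^{-1}}}_Q\approx 1$. It comes instead from the pointwise estimate $\op{\W_Q W^{-1}(y)}\lesssim[W]_{\A_1}$ for a.e.\ $y\in Q$ (this \emph{is} the $\A_1$ condition rewritten via reducing operators), yielding $\avg{|\W_Q W^{-1}f|}_Q\lesssim[W]_{\A_1}\avg{|f|}_Q$. Your ``reducing matrix for $\avg{W^{-1}}_Q$'' has no clear meaning when $p=1$ (there is no $L^{p'}$ average to reduce), and in your displayed sparse bound the weight $W^{-1}$ acting on $f$ has simply vanished.

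Second, ``Chebyshev at the $L^2$ level plus the standard sparse $L^2$ estimate'' does not produce the linear dependence on $[W]_{A_\infty^{\Asc}}$. After the reduction above one must show that the scalar operator $\mathcal{A}_\Ss f(x)=\sum_{Q}\op{W(x)\W_Q^{-1}}\avg{|f|}_Q\rchi_Q(x)$ is weak $(1,1)$ with constant $\lesssim[W]_{A_\infty^{\Asc}}$. An $L^2$ bound for $\mathcal{A}_\Ss$ via duality and H\"older forces one to control $M(|h|^{\nu'})$ in $L^{2/\nu'}$, but $\nu'\approx[W]_{A_\infty^{\Asc}}$ is large so $2/\nu'<1$ and this fails. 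The actual argument (see the proof of Theorem~\ref{thm:sio-matrix}, which specializes to $p=1$) uses the weak-type duality characterization---testing against $\chi_{E'}$ with $E'\subset E$, $|E'|\ge|E|/2$---then applies H\"older with exponent $\nu$ to split $\op{W\,\W_Q^{-1}}\chi_{E'}$, invokes the sharp reverse H\"older estimate~\eqref{eqn:sharp-rhi-bound} to make the $W$-factor $\lesssim 1$, and finally bounds $\sum_Q\avg{|f|}_Q\avg{\chi_{E'}}_{\nu',Q}|E_Q|$ by passing to maximal functions in $L^r$ with $r$ chosen so that $(r')^r\approx\nu'\approx[W]_{A_\infty^{\Asc}}$. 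It is this calibrated choice of exponents, not a generic $L^2$ sparse estimate, that yields the stated constant.
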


As a corollary to Theorem~\ref{thm:p=1-matrix}
they proved a quantitative version of the results of Muckenhoupt and
Wheeden when $p=1$.

\begin{theorem} \label{thm:p=1-scalar}
  Given $w\in A_1$ and a Calder\'on-Zygmund singular integral operator
  $T$, then for every $f\in L^1$ and $\lambda>0$,
\[   |\{x \in \rn : |w(x)T(fw^{-1})(x)|> \lambda\}|
      \leq C(n,T)[w]_{A_1}[w]_{A_\infty}\frac{1}{\lambda} \int_\rn
      |f|\,dx.  \]
 The same inequality holds if $T$ is replaced by the Hardy-Littlewood
 maximal operator $M$.
\end{theorem}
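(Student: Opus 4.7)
The plan is to deduce Theorem~\ref{thm:p=1-scalar} as an immediate corollary of the matrix-weighted Theorem~\ref{thm:p=1-matrix} by specializing to $1 \times 1$ matrices. Given a scalar weight $w$, view it as the matrix weight $W(x) = (w(x))$; then $W^{-1}f = fw^{-1}$ and
\[ W(x)\,T(W^{-1}f)(x) = w(x)\,T(fw^{-1})(x), \]
so the singular integral bound in Theorem~\ref{thm:p=1-scalar} is literally the scalar case of Theorem~\ref{thm:p=1-matrix}. For the maximal operator statement, I would use that, since positive scalars commute, the Christ--Goldberg operator collapses to
\[ M_W f(x) = \sup_Q \rchi_Q(x)\,\frac{1}{|Q|}\int_Q \frac{w(x)}{w(y)}|f(y)|\,dy = w(x)\,M(fw^{-1})(x), \]
and applying the maximal-operator part of Theorem~\ref{thm:p=1-matrix} yields the claimed weak-type bound for the multiplier expression $w\,M(fw^{-1})$.

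The remaining work is to identify the matrix characteristics with their scalar counterparts when $W$ is $1 \times 1$: namely, that $[W]_{\mathcal{A}_1} = [w]_{A_1}$ and $[W]_{A_\infty^\Asc} = [w]_{A_\infty}$. The first equality is immediate from the definition of matrix $\mathcal{A}_1$, since the operator norm of a $1 \times 1$ matrix is its absolute value and the averages reduce to the usual scalar averages of $w$ and $w^{-1}$. The second requires unwinding the definition of the scale-invariant matrix $A_\infty$ functional and observing that, in the scalar case, it reduces to the standard Fujii--Wilson $A_\infty$ constant of $w$.

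The only real obstacle is this last identification, which is essentially a bookkeeping exercise with the definitions from Section~\ref{section:prelim}: one must confirm that no extraneous dimensional constants appear when $A_\infty^\Asc$ collapses to scalar $A_\infty$. Provided the definitions are normalized so that the two coincide on $1 \times 1$ weights (which is the natural convention), no further argument is needed, and Theorem~\ref{thm:p=1-scalar} follows as the scalar shadow of Theorem~\ref{thm:p=1-matrix} for both the singular integral and the maximal operator.
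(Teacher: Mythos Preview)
Your proposal is correct and matches the paper's own framing: the paper explicitly presents Theorem~\ref{thm:p=1-scalar} as a corollary of the matrix result Theorem~\ref{thm:p=1-matrix} (both quoted from~\cite{MR4269407}), obtained by specializing to $1\times 1$ matrices exactly as you describe. Your identifications $[W]_{\mathcal{A}_1}=[w]_{A_1}$ and $[W]_{A_\infty^\Asc}=[w]_{A_\infty}$ are straightforward from the definitions in Section~\ref{section:prelim} (the latter because $w_v(x)=|w(x)v|=|v|\,w(x)$ has the same Fujii--Wilson constant as $w$), and your reduction $M_Wf=w\,M(fw^{-1})$ in the scalar case is correct.
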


\begin{remark}
It is an immediate that in Theorem~\ref{thm:p=1-scalar} we can replace
  $[w]_{A_1}[w]_{A_\infty}$ by the larger constant $[w]_{A_1}^2$, and
  the analogous observation holds for Theorem~\ref{thm:p=1-matrix}.
\end{remark}

\medskip

The main results of this paper are two-fold.  First, we extend
Theorems~\ref{thm:p=1-matrix} and~\ref{thm:p=1-scalar} to $p>1$ and $A_p$
weights.  The scalar theorem is just a special case of the
matrix theorem, but we state and prove it separately, since the proof
of the matrix theorem depends on the scalar proof.

\begin{theorem}\label{thm:sio-scalar}
 Given $1<p <\infty$,  $w \in A_p$, and a Calder\'on-Zygmund singular integral operator
  $T$, then for every $f \in L^p$ and $\lambda > 0$,
  \begin{equation} \label{eqn:sio-scalar1}
|\{x \in \mathbb{R}^d : |w(x)^{\frac{1}{p}}T(fw^{-\frac{1}{p}})(x)| > \lambda
        \}| \leq
        C(n,p,T) [w]_{A_p}[w]_{A_\infty}^p
        \frac{1}{\lambda^p}\int_\rn|f|^p\,dx. 
      \end{equation}
 The same inequality holds if $T$ is replaced by the Hardy-Littlewood
 maximal operator $M$.
\end{theorem}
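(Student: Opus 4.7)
The plan is to apply sparse domination to reduce the problem to a multiplier weak-type bound for dyadic sparse operators, which can then be analyzed using H\"older's inequality together with the sharp reverse H\"older inequality. By the now-standard pointwise sparse domination for Calder\'on--Zygmund operators (Lerner, Lacey, Conde-Alonso--Rey, Hyt\"onen--Roncal--Tapiola) and the analogous bound for the maximal operator, there is a sparse family $\mathcal{S}$ of dyadic cubes such that
\[
|Tf(x)| + Mf(x) \lesssim \mathcal{A}_{\mathcal{S}} f(x) := \sum_{Q \in \mathcal{S}} \avg{|f|}_Q \chi_Q(x).
\]
After the substitution $g = fw^{-1/p}$, for which $|f|^p = |g|^p w$, it therefore suffices to establish
\[
|\{x \in \rn : w(x)^{1/p}\,\mathcal{A}_{\mathcal{S}} g(x) > \lambda\}|
\leq C [w]_{A_p}[w]_{A_\infty}^p\, \lambda^{-p} \int_\rn |g|^p\, w\, dx.
\]

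The main computation will decompose the level set via a stopping-time (principal-cube) decomposition of $\mathcal{S}$ with respect to $g$ and then bound the averages $\avg{g}_Q$ using H\"older's inequality with the $A_p$ dual weight $\sigma = w^{1-p'}$:
\[
\avg{g}_Q \leq \left(\frac{1}{|Q|}\int_Q |g|^p w\, dx\right)^{1/p}\left(\frac{\sigma(Q)}{|Q|}\right)^{1/p'}.
\]
Pairing this with the $w$-averages that the multiplier $w^{1/p}$ produces on the left-hand side, and invoking the $A_p$ condition $\avg{w}_Q^{1/p}\avg{\sigma}_Q^{1/p'} \leq [w]_{A_p}^{1/p}$, yields a base factor of $[w]_{A_p}$.

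To produce the additional factor $[w]_{A_\infty}^p$, the key tool is the sharp reverse H\"older inequality of Hyt\"onen--P\'erez: one can replace each ordinary average $\avg{w}_Q$ (respectively $\avg{\sigma}_Q$) by an $L^{r}$-average with $r = 1 + c_n/[w]_{A_\infty}$ (respectively $r = 1 + c_n/[\sigma]_{A_\infty}$, using the $A_\infty$--$A_p$ relationship to bound $[\sigma]_{A_\infty}$ in terms of $[w]_{A_\infty}$). Each self-improvement contributes a single power of $[w]_{A_\infty}$, and balancing the two sides of the H\"older split produces exactly the exponent $p$ claimed. The summation over the sparse family is then handled by a Carleson embedding argument with respect to the measure $w\, dx$, where the $\eta$-sparsity of $\mathcal{S}$ ensures a dimensional Carleson constant. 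The bound for the maximal operator $M$ follows from the same sparse analysis with no additional ideas.

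The main obstacle I anticipate is the careful bookkeeping in the last two steps: verifying that the reverse H\"older applications contribute exactly $p$ factors of $[w]_{A_\infty}$---no more, no fewer---and that the principal-cube stopping-time decomposition interacts cleanly with the Carleson embedding, so that no stray powers of $[w]_{A_p}$ or $[w]_{A_\infty}$ appear. A useful consistency check is that, upon specializing to $p=1$, the argument should recover the proof of Theorem~\ref{thm:p=1-scalar} from~\cite{MR4269407}.
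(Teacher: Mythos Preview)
Your reduction to sparse operators is correct and matches the paper, but the remainder of the outline has a genuine gap and at least one incorrect claim.

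First, the incorrect claim: there is no ``$A_\infty$--$A_p$ relationship'' that bounds $[\sigma]_{A_\infty}$ in terms of $[w]_{A_\infty}$. One has only $[\sigma]_{A_\infty}\le [\sigma]_{A_{p'}}=[w]_{A_p}^{1/(p-1)}$, so any reverse H\"older improvement on the dual side would introduce extra powers of $[w]_{A_p}$, not $[w]_{A_\infty}$. The paper avoids this entirely: only the reverse H\"older inequality for $w$ itself is used, never for $\sigma$.

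Second, and more seriously, after your substitution $g=fw^{-1/p}$ the target inequality has \emph{Lebesgue} measure on the left-hand side and the \emph{weighted} norm $\|g\|_{L^p(w)}$ on the right. A stopping-time/Carleson embedding argument with respect to $w\,dx$ naturally produces $w$-measure (or a strong $L^p(w)$ bound) on the left, which is precisely inequality~\eqref{eqn:weak-measure} rather than the multiplier weak-type inequality~\eqref{eqn:weak-mult}. You have not indicated any mechanism for passing from one to the other, and the whole point of the theorem is that these are genuinely different estimates with different sharp constants.

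The paper's argument proceeds quite differently. It does not change variables; it uses the dual characterization $\|S\|_{L^p\to L^{p,\infty}}\approx \sup_f\sup_E\inf_{E'}|E|^{1/p-1}|\langle Sf,\chi_{E'}\rangle|$ and, crucially, performs a Calder\'on--Zygmund decomposition of $f^p$ (not $f$) at height comparable to $1/|E|$, taking $E'=E\setminus\Omega$. On cubes $Q$ meeting $E'$ one then has $\langle f\rangle_{p,Q}=\langle g\rangle_Q^{1/p}$, and a single application of H\"older with exponent $p\nu$ (with $\nu=1+c[w]_{A_\infty}^{-1}$ the sharp reverse H\"older exponent of $w$) followed by the $A_p$ condition and reverse H\"older gives the factor $[w]_{A_p}^{1/p}$. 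The factor $[w]_{A_\infty}$ arises not from repeated self-improvements but from the $L^r$ operator norm of $M$, $r'\approx [w]_{A_\infty}$, when summing $\sum_Q \langle g\rangle_Q^{1/p}\langle \chi_{E'}\rangle_{(p\nu)',Q}|E_Q|$ via the pointwise bound by $(Mg)^{1/p}(M\chi_{E'})^{1/(p\nu)'}$.
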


\begin{theorem}\label{thm:sio-matrix}
 Given $1< p <\infty$,  a matrix weight $W \in \mathcal{A}_p$, and a Calder\'on-Zygmund singular integral operator
  $T$, then for every $f \in L^p(\rn,\RR^d)$ and $\lambda > 0$,
  \begin{equation} \label{eqn:sio-matrix}
|\{x \in \mathbb{R}^d : |W(x)^{\frac{1}{p}}T(W^{-\frac{1}{p}}f)(x)| > \lambda
        \}| \leq
        C(n,p,T) [W]_{\mathcal{A}_p}[W]_{A_\infty^\Asc}^p
        \frac{1}{\lambda^p}\int_\rn|f|^p\,dx. 
      \end{equation}
      Similarly, for the Christ-Goldberg maximal operator we have
      \begin{equation} \label{eqn:sio-matrix}
|\{x \in \mathbb{R}^d : M_Wf(x) > \lambda
        \}| \leq
        C(n,p) [W]_{\mathcal{A}_p}[W]_{A_\infty^\Asc}^p
        \frac{1}{\lambda^p}\int_\rn|f|^p\,dx. 
      \end{equation}
\end{theorem}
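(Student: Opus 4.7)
The plan is to reduce both inequalities to a weak-type estimate for a \emph{scalar} sparse operator via matrix-weighted sparse domination, and then combine a Calder\'on--Zygmund decomposition of $|f|^p$ at height $\lambda^p$ with sharp reverse H\"older properties of the relevant scalar $A_\infty^\Asc$ weights. The structure mirrors the proof of the scalar case (Theorem~\ref{thm:sio-scalar}) while tracking the matrix structure throughout; both $T$ and $M_W$ will be treated uniformly since their matrix-weighted sparse dominations have the same form.

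First I would invoke the matrix-weighted sparse domination developed in~\cite{MR4269407}: for every $f\in L^p(\rn,\RR^d)$ there exist a sparse family $\mathcal{S}$ and reducing matrices $\{W_Q\}_{Q\in\mathcal{S}}$ associated to $W^{1/p}$ on each $Q$ such that
\[
|W(x)^{1/p} T(W^{-1/p}f)(x)| \leq C \sum_{Q\in\mathcal{S}} \op{W(x)^{1/p} W_Q^{-1}}\, \avg{|W_Q W^{-1/p}f|}_Q \chi_Q(x),
\]
and an entirely parallel pointwise bound holds for $M_W f$. It thus suffices to prove the weak-$L^p$ inequality for the scalar sparse operator on the right-hand side with constant a multiple of $[W]_{\mathcal{A}_p}[W]_{A_\infty^\Asc}^p$.

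Second, I would perform a Calder\'on--Zygmund decomposition of $|f|^p$ at level $\lambda^p$, producing pairwise disjoint dyadic cubes $\{Q_j\}$ and a splitting $f = g + b$ with $g$ essentially bounded by $\lambda$ (in the sense of the matrix norms used to couple $f$ to $W$) and $b$ supported in $\bigcup_j Q_j$. The set $\bigcup_j 3Q_j$ has Lebesgue measure at most $C\lambda^{-p}\|f\|_p^p$, so only the sparse operator on $g$, and on $b$ off this exceptional set, remain to estimate. For the $g$-contribution I would apply the known strong-type matrix sparse bound, whose constant is of order $[W]_{\mathcal{A}_p}^{1/p}[W]_{A_\infty^\Asc}$, together with Chebyshev; raising to the $p$-th power produces exactly the target $[W]_{\mathcal{A}_p}[W]_{A_\infty^\Asc}^p$. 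For the $b$-contribution off $\bigcup_j 3Q_j$, I would use sparseness to convert the sum into a Carleson-type tail and then apply the sharp reverse H\"older inequality to control it.

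The main obstacle will be this last step: transferring the scalar $A_\infty$ reverse H\"older / Carleson argument used in the proof of Theorem~\ref{thm:sio-scalar} to the scalar weights $\op{W^{1/p}(\cdot) W_Q^{-1}}^p$ uniformly in $Q\in\mathcal{S}$. This uniformity is precisely what the definition of $[W]_{A_\infty^\Asc}$ via reducing matrices is designed to provide, so once it is invoked the summation closes exactly as in the scalar proof and yields the claimed constant. The Christ--Goldberg maximal operator then requires no additional argument, since the identical scalar sparse estimate applies to its sparse domination, and the same Calder\'on--Zygmund/$A_\infty^\Asc$ split delivers the stated bound.
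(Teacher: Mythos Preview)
Your reduction to the scalar sparse operator
\[
\mathcal{A}_\mathcal{S}f(x) = \sum_{Q\in\mathcal{S}} \op{W^{1/p}(x)(\W_Q^p)^{-1}}\,\avg{|f|}_{p,Q}\,\chi_Q(x)
\]
via convex-body sparse domination, pulling out the factor $[W]_{\mathcal{A}_p}^{1/p}$ with the reducing operators, and the observation that the scalar weights $\op{W^{1/p}(\cdot)(\W_Q^p)^{-1}}^p$ are uniformly in $A_\infty$ with constant $[W]_{A_\infty^\Asc}$ are all exactly what the paper does. But from this point on your argument has a real gap.

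You propose to control the good part $g$ by invoking a ``known strong-type matrix sparse bound, whose constant is of order $[W]_{\mathcal{A}_p}^{1/p}[W]_{A_\infty^\Asc}$'' and then applying Chebyshev. No such strong-type bound is available: if it were, the weak-type theorem would follow instantly by Chebyshev with no Calder\'on--Zygmund decomposition at all, and moreover it would give a mixed strong-type estimate for matrix CZOs that is strictly sharper than the best currently known bound $[W]_{\mathcal{A}_p}^{1+\frac{1}{p-1}-\frac{1}{p}}$ mentioned in the introduction. So the step you treat as routine is in fact the entire content of the theorem. Your treatment of the bad part is also not workable as stated: sparse operators have no kernel regularity, so there is no ``Carleson-type tail'' for $b$ off $\bigcup_j 3Q_j$ in the classical sense.

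The paper avoids both problems by not splitting the operator at all. It uses the characterization
\[
\|\mathcal{A}_\mathcal{S}\|_{L^p\to L^{p,\infty}} \approx \sup_{\|f\|_p=1}\ \sup_E\ \inf_{E'\subset E,\ |E'|\ge|E|/2}\ |E|^{\frac{1}{p}-1}\,\langle \mathcal{A}_\mathcal{S}f,\chi_{E'}\rangle,
\]
forms the Calder\'on--Zygmund decomposition of $|f|^p$ at height $K/|E|$ (not at $\lambda^p$), and takes $E'=E\setminus\Omega$. The only sparse cubes $Q$ that survive the pairing with $\chi_{E'}$ satisfy $Q\not\subset\Omega$, hence contain any CZ cube $Q_j$ they meet; the mean-zero of $b$ on each $Q_j$ then forces $\avg{|f|}_{p,Q}=\avg{g}_Q^{1/p}$. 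The bad part vanishes from the sum identically, and no strong-type estimate is ever invoked. What remains is the purely scalar sum $\sum_{Q\in\mathcal{S}}\avg{g}_Q^{1/p}\avg{\chi_{E'}}_{(p\nu)',Q}|E_Q|$ from the proof of Theorem~\ref{thm:sparse}, which is handled by H\"older with the sharp reverse-H\"older exponent $\nu=1+c[W]_{A_\infty^\Asc}^{-1}$ and the $L^r$ bound for $M$, yielding the factor $[W]_{A_\infty^\Asc}$ directly.
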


As noted above, multiplier weak-type inequalities follow from strong-type inequalities, so in the scalar case we can  obtain quantitative
estimates for singular integrals from the so-called ``$A_2$
theorem'' of Hyt\"onen~\cite{MR2912709} (see also~\cite{MR3000426}),
or the mixed-type inequalities of Hyt\"onen and P\'erez~\cite{MR3092729} and Hyt\"onen
and Lacey~\cite{MR3129101}; similarly, we can obtain results for the Hardy-Littlewood
maximal operator from the sharp strong-type estimates (see~\cite{dcu-paseky}).   For simplicity, we will only consider the weaker
estimates involving the $A_p$ characteristic.
Theorem~\ref{thm:sio-scalar} gives the estimate
\[ \| w^{\frac{1}{p}}T(\cdot\,w^{-\frac{1}{p}}) \|_{L^p \rightarrow
    L^{p,\infty}} \lesssim [w]_{A_p}^{\frac{1}{p}+1}.  \]
  From
  the $A_2$ theorem for $p>1$ we get the estimate
\[ \| w^{\frac{1}{p}}T(\cdot\,w^{-\frac{1}{p}}) \|_{L^p \rightarrow
    L^{p,\infty}} \lesssim [w]_{A_p}^{\max(1,\frac{1}{p-1})}. \]  Thus, we get a sharper estimate in the
range $1<p<\frac{1+\sqrt{5}}{2}$.   Similarly, for maximal operators,
the sharp constant in the strong $(p,p)$ inequality
(see~\cite{dcu-paseky}) gives a constant $[w]_{A_p}^\frac{1}{p-1}$, so again
Theorem~\ref{thm:sio-scalar}  gives a sharper constant when $p$ is
in this range.  In the matrix case, the same sharp bounds hold for the
Christ-Goldberg maximal operator (see~\cite{MR4030471}), so we can make
similar estimates.  For singular integral operators, it is conjectured
that the $A_2$ conjecture holds in the matrix case
(see~\cite{MR3689742}; currently, however, the best quantitative
estimate is $[W]_{\mathcal{A}_p}^{1+\frac{1}{p-1}-\frac{1}{p}}$, so our
estimates are sharper than those gotten from the best known
strong-type estimates when $1<p<2$.   These quantitative results for
matrix weights answer a question first raised
in~\cite[Remark~1.6]{MR4269407}.  

\medskip

We believe that Theorem~\ref{thm:p=1-scalar} and
Theorem~\ref{thm:sio-scalar} are sharp in the range close to $1$ where
we get better estimates than from the strong-type inequality.  As
evidence for this conjecture, we note that very recently, Lerner, Li,
Ombrosi, and Rivera-R\'\i os~\cite{LLOR23} proved that the sharp
constant when $p=1$ in Theorem~\ref{thm:p=1-matrix} is $[W]_{\mathcal{A}_1}^2$
for both singular integrals and the Christ-Goldberg maximal operator,
and the corresponding bound is also sharp in the scalar case.
Independent of their work we proved a lower bound for the constant
that is worse their sharp estimate.  Though superceded, we think that
our example, which is simpler than theirs, gives some insight into the behavior of weights at
this endpoint.  Therefore, we sketch the details of the construction
in an appendix.

\begin{remark}
It is interesting to compare the bounds in Theorem~\ref{thm:sio-scalar} to
the sharp constants in the scalar weak-type inequalities of the
form~\eqref{eqn:weak-measure} for maximal
operators and singular integrals.  For the Hardy-Littlewood maximal operator, the sharp
constant in inequality~\eqref{eqn:weak-measure} is proportional to
$[w]_{A_p}^\frac{1}{p}$ for $p\geq 1$.  For singular integrals, the sharp constant
is proportional to $[w]_{A_1}\log(e+[w]_{A_1})$ when
$p=1$~\cite{MR4150264}, and $[w]_{A_p}$ when
$p>1$~\cite{MR2993026}.  However, these inequalities are not
completely comparable to those in Theorem~\ref{thm:sio-scalar}.
In~\cite{MR447956}, it was shown that~\eqref{eqn:sio-scalar1} holds for the
maximal operator for all $p\geq 1$ with $w(x)=|x|^{-1}$ when $n=1$; this $w$ is
not in $A_p$ for any $p\geq 1$.  Similarly,~\eqref{eqn:sio-scalar1}  holds
for the Hilbert transform when $p=1$ with the same weight.
Additional results about the necessary and sufficient conditions for
multiplier weak-type inequalities will appear elsewhere~\cite{brandon-23}.
\end{remark}

\medskip

In our second set of results we prove analogous theorems for the
fractional maximal operator and fractional integral operators in both
the scalar and matrix case.  We again state the results separately.

\begin{theorem} \label{thm:fractional-scalar}
  Given $0<\alpha<n$, fix $1\leq p<\frac{n}{\alpha}$ and define $q$ by
  $\frac{1}{p}-\frac{1}{q}=\frac{\alpha}{n}$.   Let $w\in A_{p,q}$.
  Then for every $f \in L^p$ and $\lambda > 0$,
  \begin{multline} \label{eqn:fractional-scalar1}
|\{x \in \mathbb{R}^d : |w(x)I_\alpha(w^{-1}f)(x)| > \lambda
\}| \\
\leq
        C(n,\alpha,p) [w]_{A_{p,q}}[w^q]_{A_\infty}^q
        \frac{1}{\lambda^q}\bigg(\int_\rn|f|^p\,dx\bigg)^{\frac{q}{p}}.
      \end{multline}
 The same inequality holds if we replace $I_\alpha$ by the fractional
 maximal operator $M_\alpha$. 
\end{theorem}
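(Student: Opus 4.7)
The plan is to mirror the proof of Theorem~\ref{thm:sio-scalar}, replacing the Calder\'on--Zygmund sparse domination with the known pointwise sparse domination for $I_\alpha$ and $M_\alpha$, and using the $A_{p,q}$ condition in place of $A_p$. Specifically, one has
\[
I_\alpha f(x)\;\le\;C(n,\alpha)\sum_{j}\sum_{Q\in\mathcal{S}_j}|Q|^{\alpha/n}\langle|f|\rangle_Q\chi_Q(x)
\]
for sparse families $\mathcal{S}_j$ drawn from finitely many dyadic grids (and an analogous, more direct bound holds for $M_\alpha$). It therefore suffices to prove the weak-type estimate for a single sparse fractional operator $A^{\mathcal{S}}_\alpha g:=\sum_{Q\in\mathcal{S}}|Q|^{\alpha/n}\langle g\rangle_Q\chi_Q$.

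Write $T_wf:=w\cdot A^{\mathcal{S}}_\alpha(w^{-1}f)$. After raising to the $q$th power, the claimed estimate is equivalent to $\|T_w\|_{L^p\to L^{q,\infty}}\lesssim[w]_{A_{p,q}}^{1/q}[w^q]_{A_\infty}$. Since $q>1$ (the relation $1/p-1/q=\alpha/n>0$ forces $q>p\ge 1$), Kolmogorov's inequality reduces this to the testing bound
\[
\sum_{Q\in\mathcal{S}}|Q|^{\alpha/n}\langle w^{-1}|f|\rangle_Q\,w(F\cap Q)\;\lesssim\;[w]_{A_{p,q}}^{1/q}[w^q]_{A_\infty}\,\|f\|_{L^p}|F|^{1/q'}
\]
for every measurable $F$ of finite measure. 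Applying H\"older on each cube yields $\langle w^{-1}|f|\rangle_Q\le\langle|f|^p\rangle_Q^{1/p}\langle w^{-p'}\rangle_Q^{1/p'}$, and one invokes the $A_{p,q}$ identity
\[
|Q|^{\alpha/n}\langle w^{-p'}\rangle_Q^{1/p'}\langle w^q\rangle_Q^{1/q}\le[w]_{A_{p,q}}
\]
to rewrite the left-hand side as a sparse bilinear form in $\langle|f|^p\rangle_Q^{1/p}$, $w^q(Q)^{1/q}$, and $|F\cap Q|^{1/q'}$.

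Extraction of the $[w^q]_{A_\infty}$ factor is then carried out by a principal-cube (corona) stopping-time decomposition of $\mathcal{S}$, in which stopping cubes are selected as the maximal cubes where the $L^p$-normalized averages of $|f|$ (in a suitable form involving $\sigma=w^{-p'}$) first jump by a fixed geometric factor. The reverse-H\"older property implicit in $[w^q]_{A_\infty}$ allows the geometric series within each corona to be summed and the $w^q$-mass to be absorbed, producing exactly one factor of $[w^q]_{A_\infty}$, which becomes $[w^q]_{A_\infty}^q$ after raising to the $q$th power. The bound for $M_\alpha$ follows by the identical argument, with the simpler sparse structure making the verification easier. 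The main obstacle is the calibration of the corona decomposition so that the fractional factor $|Q|^{\alpha/n}$—absent in the singular-integral proof of Theorem~\ref{thm:sio-scalar}—is correctly absorbed by the $A_{p,q}$ testing inequality, and so that $[w^q]_{A_\infty}$ emerges to exactly the first power before taking the $q$th root back.
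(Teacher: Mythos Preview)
Your opening reduction (pointwise sparse domination for $I_\alpha$ and $M_\alpha$, then a weak-type estimate for a single fractional sparse operator) matches the paper exactly, and the Kolmogorov/testing formulation is equivalent to the paper's $L^{q,\infty}$ duality with the sets $E,E'$. After that, however, there is a genuine gap.

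First, the displayed ``$A_{p,q}$ identity''
\[
|Q|^{\alpha/n}\langle w^{-p'}\rangle_Q^{1/p'}\langle w^q\rangle_Q^{1/q}\le[w]_{A_{p,q}}
\]
is false as written: with the paper's definition of $[w]_{A_{p,q}}$ there is no $|Q|^{\alpha/n}$ factor (the correct bound is $\langle w^{-p'}\rangle_Q^{1/p'}\langle w^q\rangle_Q^{1/q}\le[w]_{A_{p,q}}^{1/q}$), and the left-hand side of your display is unbounded as $|Q|\to\infty$. So the $|Q|^{\alpha/n}$ cannot be ``absorbed by the $A_{p,q}$ testing inequality'' in the way you suggest.

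Second, the corona/stopping-time argument you sketch is too vague to be a proof. You correctly identify the obstacle---handling $|Q|^{\alpha/n}$---but you do not indicate any mechanism to overcome it, and the paper does \emph{not} use a corona decomposition here at all. Instead it performs the same Calder\'on--Zygmund decomposition of $f^p$ at height $K/|E|$ as in Theorem~\ref{thm:sio-scalar}, reducing $\langle f\rangle_{p,Q}$ to $\langle g\rangle_Q^{1/p}$ on the surviving cubes. The fractional power is then absorbed via the identity
\[
|Q|^{\alpha/n}\langle g\rangle_Q^{1/p}=\big(|Q|^{\beta/n}\langle g\rangle_Q\big)^{1/p}\le \big(M_\beta^{\mathcal D}g(x)\big)^{1/p},\qquad \beta:=\alpha p,
\]
and one applies the quantitative $L^t\to L^{qr/p}$ bound for $M_\beta^{\mathcal D}$ (Lemma~\ref{lemma:Malpha-est}) with $t$ chosen so that $1/t-p/(qr)=\beta/n$; the operator norm contributes $(r')^{p/q}$, which combines with the $(r')$ from $M\chi_{E'}$ to give $(r')^r\lesssim[w^q]_{A_\infty}$. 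This $M_\beta$ step is the actual replacement for what you left as ``calibration of the corona decomposition,'' and it is the missing idea in your proposal.
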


\begin{theorem} \label{thm:fractional-matrix}
  Given $0<\alpha<n$, fix $1\leq p<\frac{n}{\alpha}$ and define $q$ by
  $\frac{1}{p}-\frac{1}{q}=\frac{\alpha}{n}$.   Let $W\in \mathcal{A}_{p,q}$.
  Then for every $f \in L^p(\rn,\RR^d)$ and $\lambda > 0$,
  \begin{multline} \label{eqn:fractional-matrix}
|\{x \in \mathbb{R}^d : |W(x)I_\alpha(W^{-1}f)(x)| > \lambda
\}| \\
\leq
        C(n,\alpha,p) [W]_{\mathcal{A}_{p,q}}[W^q]_{A_\infty^\Asc}^q
        \frac{1}{\lambda^q}\bigg(\int_\rn|f|^p\,dx\bigg)^{\frac{q}{p}}.
      \end{multline}
 The same inequality holds if we replace $WI_\alpha(W^{-1}\cdot)$  by the fractional
 Christ-Goldberg maximal operator $M_{W,\alpha}$.
\end{theorem}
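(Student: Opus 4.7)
The plan is to reduce the matrix-weighted statement to the scalar version in Theorem~\ref{thm:fractional-scalar} via the same reducing-operator mechanism that bridges Theorem~\ref{thm:sio-scalar} and Theorem~\ref{thm:sio-matrix} in this paper, and that bridged Theorem~\ref{thm:p=1-scalar} and Theorem~\ref{thm:p=1-matrix} in~\cite{MR4269407}. Two ingredients drive the argument: a pointwise sparse bound for $I_\alpha$, and reducing operators adapted to the matrix class $\mathcal{A}_{p,q}$.

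First, I would invoke the standard pointwise sparse domination for the fractional integral: there exists a sparse family $\Ss$ of dyadic cubes such that, componentwise on the vector-valued input $W^{-1}f$,
\[ |I_\alpha(W^{-1}f)(x)| \leq C(n,\alpha)\sum_{Q\in\Ss} |Q|^{\alpha/n}\Big|\aver{Q}W(y)^{-1}f(y)\,dy\Big|\chi_Q(x). \]
For the fractional Christ--Goldberg maximal operator $M_{W,\alpha}$, a direct stopping-cube argument yields a bound of the same form with the outer matrix weight already absorbed. In either case, the problem reduces to proving the claimed weak-type inequality for the sparse expression on the right.

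Next, for $W\in\mathcal{A}_{p,q}$ I would introduce reducing operators $\V_Q$ and $\W_Q$: positive definite constant matrices satisfying, for every $\mathbf{e}\in\RR^d$,
\[ |\V_Q\mathbf{e}|^q \approx \aver{Q}|W(y)^{1/q}\mathbf{e}|^q\,dy, \qquad |\W_Q\mathbf{e}|^{p'}\approx \aver{Q}|W(y)^{-1/p}\mathbf{e}|^{p'}\,dy, \]
with $\op{\V_Q\W_Q}\lesssim [W]_{\mathcal{A}_{p,q}}^{1/q}$. Inserting $\W_Q^{-1}\W_Q$ into the inner average and $\V_Q\V_Q^{-1}$ outside linearizes the matrix sparse form: the vectorial factor $W(x)\aver{Q}W^{-1}f$ is dominated in norm by
\[ \op{W(x)^{1/q}\V_Q^{-1}}\,\op{\V_Q\W_Q}\,\aver{Q}|\W_Q^{-1}W(y)^{-1/p}(W^{1/p}f)(y)|\,dy, \]
reducing the matter to a scalar sparse fractional estimate in which the scalar weight $W^q$ plays the role that $w^q$ played in Theorem~\ref{thm:fractional-scalar}.

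The principal obstacle is then to rerun the principal-cube stopping-time argument underlying Theorem~\ref{thm:fractional-scalar} on this scalar object, using a reverse Hölder inequality for $W^q$ to recover the factor $[W^q]_{A_\infty^{\Asc}}^q$ with the correct $q$-th power. Matching exponents so that the final constant reads exactly $[W]_{\mathcal{A}_{p,q}}[W^q]_{A_\infty^{\Asc}}^q$ requires careful bookkeeping of how the $\mathcal{A}_{p,q}$ characteristic enters through $\op{\V_Q\W_Q}$ versus how the $A_\infty^{\Asc}$ characteristic accumulates through iteration on principal cubes. Once the scalar sparse bound is in place, the analogous estimate for $M_{W,\alpha}$ follows immediately, since that operator is itself pointwise majorized by a sparse fractional form of the required type.
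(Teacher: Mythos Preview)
Your overall strategy---sparse domination, reducing operators for $\mathcal{A}_{p,q}$, and reduction to the scalar argument of Theorem~\ref{thm:fractional-scalar}---is precisely the paper's route. Two steps, however, are miswritten in ways that matter.

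The sparse inequality you display is false: its right-hand side carries the norm of the \emph{vector average} $\big|\avgint_Q W^{-1}f\big|$, which can vanish by cancellation while $I_\alpha(W^{-1}f)$ does not. Componentwise sparse domination yields instead the average of the norm, $\avgint_Q|W^{-1}f|$---but once the norm sits inside the average you can no longer slide constant matrices through it, so your subsequent ``insert $\W_Q^{-1}\W_Q$ and $\V_Q\V_Q^{-1}$'' manoeuvre breaks down. The paper avoids this by first bounding $|W(x)I_\alpha(W^{-1}f)(x)|\le I_\alpha\big(|W(x)W^{-1}(\cdot)f(\cdot)|\big)(x)$ pointwise, sparsifying the resulting \emph{scalar} function, and inserting the reducing operator pointwise in $y$ inside $\avg{|W(x)W^{-1}f|}_Q$ before any norm is taken (Proposition~\ref{prop:fractional-matrix-sparse}). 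Separately, your reducing operators carry the wrong powers of $W$: in the $\mathcal{A}_{p,q}$ setting of this theorem the weight enters as $W$ and $W^{-1}$, not $W^{1/q}$ and $W^{-1/p}$, so by Proposition~\ref{prop:frac-reducing} one has $|\V_Q^p v|^q\approx\avgint_Q|W(x)v|^q\,dx$ and $|\overline{\V}_Q^{p'}v|^{p'}\approx\avgint_Q|W^{-1}(x)v|^{p'}\,dx$; your displayed linearization inherits this confusion, and the factor $W^{-1/p}(W^{1/p}f)$ simply collapses to $f$. With these two points corrected, the remainder of your plan---reverse H\"older for $|Wv|^q$ to control $\avgint_Q\op{W(x)(\V_Q^q)^{-1}}^{q\nu}\,dx$, then rerunning the scalar sparse estimate---is exactly what the paper does.
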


We can again use quantitative strong-type inequalities to deduce
multiplier weak-type inequalities for fractional maximal operators and
fractional integrals.  Lacey, {\em et al.}~\cite{MR2652182} proved the
following sharp bounds:
\[ \|wM_\alpha f\|_{L^q}
  \leq C[w]_{A_{p,q}}^{(1-\frac{\alpha}{n})\frac{p'}{q}}
  \|wf\|_{L^p},
  \quad
\|wI_\alpha f\|_{L^q}
  \leq C[w]_{A_{p,q}}^{(1-\frac{\alpha}{n})\max(1,\frac{p'}{q})}
  \|wf\|_{L^p}.
\]
Moen and Isralowitz~\cite{MR4030471} showed that the Christ-Goldberg
fractional maximal operator $M_{W,\alpha}$ satisfied the same sharp
inequality; for the fractional integral operator they were only able
to prove a quantitative estimate with the larger exponent
$(1-\frac{\alpha}{n})\frac{p'}{q}+1$.  It is conjectured that the
sharp exponent for $I_\alpha$ in the matrix case is the same as in the
scalar case.  Thus, for the fractional
maximal operators in both the scalar and matrix case, we get a better estimate close to $1$:  more
precisely in the range
\[ 1 < p < \frac{\big(1-\frac{2\alpha}{n}\big)+
    \sqrt{5-\frac{4\alpha}{n}}}{2\big(1-\frac{\alpha^2}{n^2}\big)}. \]

\begin{remark}
  It is an open problem, even in the scalar case, to determine what the necessary and
  sufficient conditions are on $w$ for the multiplier weak-type
  inequality for
  the fractional maximal and integral operators.
\end{remark}

The remainder of this paper is organized as follows.  In
 Section~\ref{section:prelim} we gather some preliminary results on
 scalar and matrix weights, and define the operators we are interested
 in.  In
 Section~\ref{section:scalar} we prove
 Theorem~\ref{thm:sio-scalar} and~\ref{thm:fractional-scalar}, and in
 Section~\ref{section:matrix} we prove Theorems~\ref{thm:sio-matrix}
 and~\ref{thm:fractional-matrix}.   Finally, in
Appendix~\ref{section:example} we give a lower bound for the $p=1$ case
 by constructing an explicit example.

\section{Preliminaries}
\label{section:prelim}

Throughout this paper, $n$ will denote the dimension of the domain
$\rn$ of all functions; $d$ will denote the dimension of vector and
matrix valued functions.   By $C$, $c$, etc. we will mean constants
that depend only on underlying parameters but may otherwise change
from line to line.  If we write $A\lesssim B$, we mean that there
exists $c>0$ such that $A\leq cB$.  By a cube $Q$ we mean a cube in $\rn$
whose sides are parallel to the coordinate axes.

\subsection*{Scalar operators}
We will be working with  the following operators.  For more
information, see~\cite{duoandikoetxea01}.   Given $f\in
L^1_{loc}$, we define the Hardy-Littlewood maximal operator by
\[ Mf(x) := \sup_Q \avgint_Q |f(y)|\,dy \cdot \rchi_Q(x), \]
where the supremum is taken over all cubes $Q$. We will need the
following estimate, which follows from the standard proof of the
boundedness of $M$ via Marcinkiewicz interpolation:  for $1<p<\infty$,
$\|Mf\|_{L^p} \leq Cp'\|f\|_{L^p}$.

A Calder\'on-Zygmund singular integral is an operator $T$ that
is bounded on $L^2$, such that there exists a kernel function $K$
defined on $\rn \times \rn \setminus \{ (x,x) : x \in \rn\}$ so that
for all $f\in L^2_c$, and $x\not\in \supp(f)$,
\[ Tf(x) = \int_\rn K(x,y) f(y)\,dy.  \]
We assume that the kernel satisfies the size estimate
\[ |K(x,y)| \leq \frac{C}{|x-y|^n}, \]
and the regularity estimate
\[ |K(x+h,y)-K(x,y)|+|K(x,y+h)-K(x,y)| \leq
  C\frac{|h|^\delta}{|x-y|^{n+\delta}} \]
  whenever $|x-y|>2|h|$.  

  Given $0<\alpha<n$ and $f\in
L^1_{loc}$,  we define the fractional maximal operator by
\[ M_\alpha f(x) := \sup_Q |Q|^{\frac{\alpha}{n}} \avgint_Q |f(y)|\,dy \cdot \rchi_Q(x), \]
where the supremum is again taken over all cubes $Q$.  We define the fractional
integral operator $I_\alpha$ to be the convolution operator
\[ I_\alpha f(x) := \int_\rn \frac{f(y)}{|x-y|^{n-\alpha}}. \]

\medskip

\subsection*{Scalar weights} We will need the following facts about
scalar weights. For more information,
see~\cite{duoandikoetxea01,dcu-paseky,CruzUribe:2016ji}. Given
$1 < p < \infty$, we say that $w \in A_p$ if
\[
  [w]_{A_p} := \sup_Q \left(\avgint_Q{w(x)\,dx}\right)
  \left(\avgint_Q{w(x)^{1-p'}\,dx}\right)^{p-1} < \infty, 
\]
where the supremum is taken over cubes $Q$. For $p = 1$, we say $w \in A_1$ if 
\[
    [w]_{A_1} := \sup_Q \esssup_{x\in Q}
    w(x)^{-1}\left(\avgint_Q{w(x)\,dx}\right) < \infty , 
\]
where the supremum is again taken over cubes $Q$.  Define
$A_\infty= \bigcup_{p\geq 1}A_p$.  
A weight $w\in A_\infty$ if and
only if $w$ satisfies the reverse H\"older inequality for some exponent
$s>1$, denoted by $w\in RH_s$:
\[ [w]_{RH_s} := \sup_Q \bigg(\avgint_Q w(x)^s\,dx\bigg)^{\frac{1}{s}}
  \bigg(\avgint_Q w(x)\,dx\bigg)^{-1} < \infty, \]
where again the supremum is taken over all cubes.   We will need the
following sharp version of the reverse H\"older inequality due to
Hyt\"onen and P\'erez~\cite{MR3092729}.  Here, by $[w]_{A_\infty}$ we
mean the Fujii-Wilson $A_\infty$ characteristic of $w$.  The precise
definition does not matter for us, and we refer the reader
to~\cite{MR3092729} for more details.  

\begin{prop} \label{prop:sharp-rhi}
  Given $w\in A_\infty$, there exists a constant $c=c(n)$ such that if
  $\nu=1+c[w]_{A_\infty}^{-1}$, then $w\in RH_\nu$ and $[w]_{RH_\nu}\leq 2$.
\end{prop}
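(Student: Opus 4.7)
The plan is to prove the reverse H\"older estimate by a Calder\'on-Zygmund stopping-time construction in which the Fujii-Wilson characteristic enters as the key quantitative input. Fix a cube $Q_0$; it suffices to show that
\[
    \int_{Q_0} w^\nu\,dx \leq 2^\nu \avg{w}_{Q_0}^\nu\,|Q_0|
\]
for some $\nu=1+c[w]_{A_\infty}^{-1}$. Since $w(x)\le M^d_{Q_0}(w\chi_{Q_0})(x)$ a.e.\ on $Q_0$ by Lebesgue differentiation, it is natural to decompose $Q_0$ along the level sets of this local dyadic maximal function.

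For a parameter $b>1$ to be chosen, build recursively a family of stopping cubes: set $\mathcal{S}_0=\{Q_0\}$ and, given $P\in \mathcal{S}_k$, let the elements of $\mathcal{S}_{k+1}$ contained in $P$ be the maximal dyadic subcubes $Q\subsetneq P$ with $\avg{w}_Q>b\,\avg{w}_P$. Write $\Omega_k=\bigsqcup_{P\in \mathcal{S}_k} P$. The cubes selected inside $P$ cover exactly $\{x\in P:M^d_P(w\chi_P)(x)>b\avg{w}_P\}$, so Chebyshev's inequality together with the Fujii-Wilson condition applied on $P$ gives
\[
  |\Omega_{k+1}\cap P|
  \le \frac{1}{b\avg{w}_P}\int_P M(w\chi_P)\,dx
  \le \frac{[w]_{A_\infty}}{b}|P|.
\]
Summing over $P\in\mathcal{S}_k$ and iterating yields $|\Omega_k|\le ([w]_{A_\infty}/b)^k|Q_0|$, and the choice $b=2[w]_{A_\infty}$ produces the geometric decay $|\Omega_k|\le 2^{-k}|Q_0|$.

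Next, partition $Q_0=\bigsqcup_{k\ge 0}(\Omega_k\setminus\Omega_{k+1})$ and estimate $w$ pointwise on each layer. A point in $\Omega_k\setminus\Omega_{k+1}$ lies in some $P\in\mathcal{S}_k$ but in no member of $\mathcal{S}_{k+1}$, so by maximality $M^d_P(w\chi_P)\le b\avg{w}_P$ there, hence $w(x)\le b\avg{w}_P$ a.e.; iterating the stopping criterion bounds $\avg{w}_P\le (2^n b)^k\avg{w}_{Q_0}$. Writing $\int_{Q_0} w^\nu\,dx=\sum_k\int_{\Omega_k\setminus\Omega_{k+1}} w\cdot w^{\nu-1}\,dx$, estimating $w^{\nu-1}$ on each layer by its pointwise bound, and controlling the layer's total $w$-mass by $w(\Omega_k)\le (2^n b)^k\avg{w}_{Q_0}|\Omega_k|$, produces a geometric series whose ratio is a power of $(2^n b)^{\nu-1}$ times the mass-decay factor.

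The principal obstacle is tuning $b$ and $\nu$ simultaneously so that the series not only converges but is bounded by an explicit constant giving $[w]_{RH_\nu}\le 2$. Because $b\sim [w]_{A_\infty}$ is dictated by the mass-decay step, the factor $(2^n b)^{\nu-1}$ forces $\nu-1\lesssim 1/[w]_{A_\infty}$ for the ratio to drop below $1$. A more delicate accounting --- applying the weighted layer-cake identity
\[
\int_{Q_0} w\cdot M(w\chi_{Q_0})^{\nu-1}\,dx
=(\nu-1)\int_0^\infty t^{\nu-2}\,w(\{x\in Q_0:M(w\chi_{Q_0})(x)>t\})\,dt
\]
together with iterating the Fujii-Wilson estimate on the CZ cubes at each scale --- delivers the sharp relation $\nu-1=c[w]_{A_\infty}^{-1}$ with the clean constant $2$, as executed in Hyt\"onen and P\'erez~\cite{MR3092729}.
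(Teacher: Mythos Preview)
The paper does not prove this proposition at all; it is quoted verbatim as a result of Hyt\"onen and P\'erez~\cite{MR3092729} and used as a black box throughout. So there is no ``paper's own proof'' to compare your attempt against.

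That said, your sketch is a fair outline of the Hyt\"onen--P\'erez argument: the Calder\'on--Zygmund stopping-time tree with stopping parameter $b\sim[w]_{A_\infty}$, the use of the Fujii--Wilson definition via Chebyshev to get Lebesgue-measure decay of the generations, and the pointwise control $w\le b\avg{w}_P$ on the nonstopping region are all correct. You are also right to flag the obstacle: the naive layer estimate $w(\Omega_k)\le (2^n b)^k\avg{w}_{Q_0}|\Omega_k|$, combined with $|\Omega_k|\le 2^{-k}|Q_0|$, gives a bound that \emph{grows} in $k$, so it cannot be summed directly. The resolution in~\cite{MR3092729} is not quite the weighted layer-cake identity you wrote, but rather a direct summation of $\int_{P\setminus\Omega_{k+1}(P)} w\cdot w^{\nu-1}$ using the per-cube mass bound $w(P)=\avg{w}_P|P|$ together with the telescoping structure of the tree; the geometric factor one must control is $(2^n b)^{\nu-1}\cdot([w]_{A_\infty}/b)$, and choosing $b=2^{n+1}[w]_{A_\infty}$ and $\nu-1=c(n)[w]_{A_\infty}^{-1}$ makes this at most $1/2$. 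Your sketch points in the right direction but stops short of that computation; since you ultimately cite~\cite{MR3092729} for the endgame, this is acceptable as a proof-by-reference --- which is exactly what the paper itself does.
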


Finally, we define the fractional weight classes $A_{p,q}$.  Given
$0<\alpha<n$ and $1\leq p< \frac{n}{\alpha}$, define $q$ by
$\frac{1}{p}-\frac{1}{q}=\frac{\alpha}{n}$.  A weight $w$ is in $A_{p,q}$
if
\[
  [w]_{A_{p,q}} := \sup_Q \left(\avgint_Q{w(x)^q\,dx}\right)
  \left(\avgint_Q{w(x)^{-p'}\,dx}\right)^{\frac{q}{p'}} < \infty, 
\]
where the supremum is taken over all cubes $Q$.  A weight $w\in
A_{1,q}$ if
\[ [w]_{A_{1,q}} := \sup_Q \esssup_{x\in Q}
  w(x)^{-q} \bigg(\avgint_Q w(x)^q\,dx\bigg) <\infty. \]
Note that for all $p$ and $q$, $w\in A_{p,q}$ if and only if $w^q\in
A_{1+\frac{q}{p'}}$.  Consequently, $w^q\in A_\infty$
and so satisfies
a reverse H\"older inequality.

\medskip

\subsection*{Matrix weights and operators}  We need the following basic information
about matrix weights.  For more
details, see~\cite{MR4269407,MR3803292,MR3544941}.
Given a $d\times d$ matrix $W$, the operator norm of $W$ is defined to
be
\[ \op{W} = \sup\{ |Wv| : v\in \RR^d, |v|=1 \}. \]
If $\{e_i\}_{i=1}^d$ is the standard orthonormal basis of $\RR^d$,
then
\begin{equation} \label{eqn:alt-op-norm}
 \op{W} \approx \sum_{i=1}^d |We_i|; 
\end{equation}
the implicit constants depend only on $d$.
Let $\Ss_d$
denote the collection of all $d\times d$ symmetric, positive
definite matrices.  Though matrices in general do not commute,
matrices in $\Ss_d$ commute in operator norm:  if $W,\,V\in \Ss_d$,
then $\op{WV}= \op{VW}$. 

By a matrix weight we mean a function $W : \rn
\rightarrow \Ss_d$ whose entries are measurable scalar functions.
    For $1 < p < \infty$, a matrix weight $W$ is in $\A_p$,
    denoted by $W \in \mathcal{A}_p$, if
    \[
      [W]_{\mathcal{A}_p}
      :=
      \avgint_Q \left(\avgint_Q \op{W^{\frac{1}{p}}(x)
          W^{-\frac{1}{p}}(y)}^{p'}\,dy\right)^{\frac{p}{p'}}\,dx < \infty,
    \]
    where the supremum is taken over cubes $Q$. When $p = 1$, we say that $W \in \A_1$ if 
    \[
        [W]_{\mathcal{A}_1} := \sup_Q\esssup_{x \in Q}\avgint_Q{\op{W(y)W^{-1}(x)}\,dy} < \infty.
    \]

    A proof of the following result can be found in~\cite{MR2015733} (see
also~\cite{mb-dcu-preprint}).

\begin{prop} \label{prop:reducing-defn}
Fix $1\leq p<\infty$.  Given a matrix weight $W$,  we can define a norm $\rho_W$
on $\RR^d$ as follows: given a cube $Q$,  for any vector $v\in \RR^d$ let
\[ \rho_{W,p}(v) = \bigg(\avgint_Q |W^{\frac{1}{p}}(x)v|^p\,dx\bigg)^{\frac{1}{p}}. \]
Then there exists a constant matrix $\W_Q^p$, called the reducing
matrix of $\rho_{W,p}$, such that for all
$v$,
\[ \rho_{W,p}(v) \approx |\W_Q^p v|. \]
Moreover, if $W\in \mathcal{A}_p$ and  if we let $ \overline{\W}_Q^{p'}$ denote the reducing matrix of
the norm $\rho_{W^{-p'/p},p'}$, then we have that
\[ \sup_Q \op{\W_Q^p \overline{\W}_Q^{p'}} \approx [W]_{\A_p}^{\frac{1}{p}}, \]
and the implicit constants depend only on $d$ and $p$.
\end{prop}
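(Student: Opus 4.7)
My plan is to prove the two claims in the proposition separately: first, the existence of the reducing matrix $\W_Q^p$, which converts the abstract norm $\rho_{W,p}$ into a concrete matrix-linear object; second, the quantitative comparison $\sup_Q \op{\W_Q^p \overline{\W}_Q^{p'}} \approx [W]_{\A_p}^{1/p}$, which is the technical heart of the proposition.

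For the existence of the reducing matrix, I would first verify that $\rho_{W,p}$ is indeed a norm on $\RR^d$: homogeneity and subadditivity follow from Minkowski's inequality, positive definiteness from the fact that $W(x) \in \Ss_d$ (hence invertible) for almost every $x$, and finiteness from local integrability of $W$. The John ellipsoid theorem applied to the symmetric convex body $K = \{v : \rho_{W,p}(v) \leq 1\}$ then yields an ellipsoid $\mathcal{E} = \{v : |Av| \leq 1\}$ for a unique $A \in \Ss_d$ satisfying $\mathcal{E} \subseteq K \subseteq \sqrt{d}\,\mathcal{E}$. Setting $\W_Q^p := A$ gives $\rho_{W,p}(v) \approx |\W_Q^p v|$ with constants depending only on $d$.

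For the quantitative estimate, my strategy is to apply \eqref{eqn:alt-op-norm} successively in the two nested integrals, peeling them off one at a time. A crucial preliminary observation is that $\op{AB} = \op{BA}$ whenever $A, B$ are symmetric (since their nonzero singular values coincide), so $\op{W^{1/p}(x)W^{-1/p}(y)} = \op{W^{-1/p}(y)W^{1/p}(x)}$. Applying \eqref{eqn:alt-op-norm} to the second form together with the equivalence of $\ell^1$ and $\ell^{p'}$ norms on $\RR^d$ gives $\op{W^{-1/p}(y) W^{1/p}(x)}^{p'} \approx \sum_{i=1}^d |W^{-1/p}(y) v_i(x)|^{p'}$ with $v_i(x) := W^{1/p}(x) e_i$. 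Averaging in $y$ and invoking the defining property of $\overline{\W}_Q^{p'}$ for each fixed vector $v_i(x)$ yields, after raising to the $p/p'$-th power, $\bigl(\avgint_Q \op{W^{1/p}(x) W^{-1/p}(y)}^{p'}\,dy\bigr)^{p/p'} \approx \op{W^{1/p}(x)\overline{\W}_Q^{p'}}^p$. Iterating the same maneuver for the outer $x$-average, this time using the reducing matrix property of $\W_Q^p$ on the fixed vectors $\overline{\W}_Q^{p'} e_i$, produces $\op{\W_Q^p \overline{\W}_Q^{p'}}^p$; taking $p$-th roots and the supremum over $Q$ gives the desired comparison.

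The main obstacle will be bookkeeping of multiplicative constants: each application of \eqref{eqn:alt-op-norm}, of the $\ell^1 \approx \ell^{p'}$ comparison, and of the reducing matrix property contributes its own constant depending on $d$ and $p$, and I must verify that chaining these estimates in both directions of the equivalence preserves the form $\approx [W]_{\A_p}^{1/p}$. The symmetric swap $\op{AB} = \op{BA}$ is essential: without it one cannot cleanly isolate the variable-dependent vectors $v_i(x)$ inside the inner integral and reduce to the single-variable averaging problems of the kind the reducing matrices are designed to handle.
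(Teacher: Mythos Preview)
The paper does not prove this proposition; it simply cites \cite{MR2015733} (and the preprint \cite{mb-dcu-preprint}) for a proof. Your proposal is correct and is precisely the standard argument one finds in those references: the existence of the reducing matrix via the John ellipsoid theorem, and then the two-step basis expansion using \eqref{eqn:alt-op-norm} together with the norm-commutativity $\op{AB}=\op{BA}$ for symmetric matrices to reduce the double average defining $[W]_{\mathcal{A}_p}$ to $\op{\W_Q^p\overline{\W}_Q^{p'}}^p$. The bookkeeping you flag is routine, since every constant that appears depends only on $d$ and $p$.
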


There is a connection between matrix $\A_p$ weights and scalar $A_p$
weights.  The following result is proved in~\cite{MR1928089}; see
also~\cite{mb-dcu-preprint}.  

\begin{prop} \label{eqn:matrix-scalar-Ap}
Given $1\leq p <\infty$ and a matrix weight $W\in A_p$, for every
vector $v\in \RR^d$, the scalar weight $w_v(x) =|W^{\frac{1}{p}}(x)v|^p$
is in $A_p$, and $[w_v]_{A_p}\leq [W]_{\mathcal{A}_p}$.  
\end{prop}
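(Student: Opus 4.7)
The plan is to verify the scalar $A_p$ condition for $w_v$ on an arbitrary cube $Q$ by controlling the $w_v$ averages pointwise using operator-norm submultiplicativity, and then integrating to recognize the matrix $\mathcal{A}_p$ integrand.

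First I would handle $1<p<\infty$. Write $f(x) = |W^{1/p}(x)v|$ so that $w_v(x) = f(x)^p$, and note that $p(1-p') = -p'$, hence $w_v(x)^{1-p'} = f(x)^{-p'}$. The key pointwise inequality is obtained from the identity
\[
W^{1/p}(x)v = W^{1/p}(x)W^{-1/p}(y)\,W^{1/p}(y)v,
\]
which gives
\[
f(x) \leq \op{W^{1/p}(x)W^{-1/p}(y)}\,f(y),
\]
and therefore $f(x)/f(y) \leq \op{W^{1/p}(x)W^{-1/p}(y)}$ for a.e.\ $x,y\in Q$. (This is where nondegeneracy of $W$, so that $f(y)>0$ a.e., is used; this is a standard property of matrices in $\Ss_d$.)

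Next I would raise this inequality to the $p'$ power and average in $y\in Q$, obtaining
\[
f(x)^{p'}\avgint_Q f(y)^{-p'}\,dy \leq \avgint_Q \op{W^{1/p}(x)W^{-1/p}(y)}^{p'}\,dy.
\]
Raising to the power $p/p' = p-1$ and averaging in $x\in Q$ then yields
\[
\left(\avgint_Q w_v(x)\,dx\right)\left(\avgint_Q w_v(y)^{1-p'}\,dy\right)^{p-1}
\leq \avgint_Q\!\left(\avgint_Q \op{W^{1/p}(x)W^{-1/p}(y)}^{p'}\,dy\right)^{p/p'}\,dx.
\]
Taking the supremum over cubes $Q$ gives $[w_v]_{A_p}\leq [W]_{\mathcal{A}_p}$.

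For the $p=1$ case the argument is simpler: $w_v(x) = |W(x)v|$, and the same identity yields $w_v(y) \leq \op{W(y)W^{-1}(x)}\,w_v(x)$ a.e.\ in $Q$. Dividing by $w_v(x)$, averaging in $y$, taking $\esssup$ over $x\in Q$, and then supremum over $Q$ immediately produces $[w_v]_{A_1}\leq [W]_{\mathcal{A}_1}$.

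The only subtle point is the submultiplicativity step $f(x) \leq \op{W^{1/p}(x)W^{-1/p}(y)}\,f(y)$; once this is in hand, the proof is essentially bookkeeping with the exponents $p$ and $p'$ to match the $\mathcal{A}_p$ integrand exactly. No choice of reducing matrices or duality is required, so the argument is direct.
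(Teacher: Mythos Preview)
Your proof is correct. Note, however, that the paper does not give its own proof of this proposition---it simply cites it from the literature (Goldberg, and a preprint of the first author). Your direct argument via the pointwise submultiplicativity bound $|W^{1/p}(x)v| \leq \op{W^{1/p}(x)W^{-1/p}(y)}\,|W^{1/p}(y)v|$, followed by averaging in $y$, raising to the power $p/p'$, and averaging in $x$, is exactly the standard proof that appears in those references, so there is nothing to compare.
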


\begin{remark}
In light of Proposition~\ref{eqn:matrix-scalar-Ap}, given $W\in \mathcal{A}_p$,
we define its scalar $A_\infty$ characteristic by 
\[ [W]_{A_\infty^{sc}} := \sup\{ [w_v]_{A_\infty} : v \in \RR^d \}.  \]
\end{remark}

\medskip

We also define fractional matrix weights.  Given $0<\alpha<n$ and $1\leq p<\frac{n}{\alpha}$,
define $q$ by $\frac{1}{p}-\frac{1}{q}=\frac{\alpha}{n}$.  A matrix
weight $W$ is in  $\mathcal{A}_{p,q}$, $1<p<\infty$, if 
\[
  [W]_{\mathcal{A}_{p,q}} :=
  \sup_Q \avgint_Q \left(\avgint_Q \op{W(x)W^{-1}(y)}^{p'}\,dy\right)^{\frac{q}{p'}}\,dx < \infty.
\]
When $p=1$, $W\in \A_{1,q}$ if
\[
  [W]_{\mathcal{A}_{1,q}} :=
  \sup_Q \esssup_{x\in Q}
 \avgint_Q \op{W(y)W^{-1}(x)}^{q}\,dy < \infty.
\]
These weights were introduced in~\cite{MR4030471} with a somewhat
different defintion.  Our definition is equivalent to theirs if we
replace the weight $W$ by $W^{\frac{1}{q}}$.

We also have a reducing operator characterization of $\A_{p,q}$.
To avoid confusion, since our definition of $A_p$ and $A_{p,q}$ are
different, we introduce different notation.  For a proof,
see~\cite[Section~2]{MR4030471}. 

\begin{prop} \label{prop:frac-reducing}
   Given $0<\alpha<n$ and $1\leq p<\frac{n}{\alpha}$,
define $q$ by $\frac{1}{p}-\frac{1}{q}=\frac{\alpha}{n}$.   Let $W\in
\A_{p,q}$.  Then for
every cube $Q$ there exist constant matrices $\V_Q^p$ and
$\overline{\V}_Q^{p'}$ such that for every $v\in \RR^d$,
\[ |\V_Q^pv|\approx \bigg(\avgint_Q |W(x)v|^q\,dx\bigg)^{\frac{1}{q}},
  \quad
  |\overline{\V}_Q^{p'}| \approx
  \bigg(\avgint_Q |W^{-1}(x)v|^{p'}\,dx\bigg)^{\frac{1}{p'}}. \]
Moreover,
\[ \sup_Q \op{\V_Q^p \overline{\V}_Q^{p'}}
  \approx [W]_{\mathcal{A}_{p,q}}^{\frac{1}{q}}. \]
In each case, the implicit constants depend only $d$, $p$, and
$\alpha$.  
\end{prop}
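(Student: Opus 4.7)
My plan is to adapt the argument behind Proposition~\ref{prop:reducing-defn} to the off-diagonal exponents $(p,q)$.

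Fix a cube $Q\subset\rn$ and define the two functionals on $\RR^d$,
\[
  \rho(v) := \bigg(\avgint_Q |W(x)v|^q\,dx\bigg)^{1/q},
  \qquad
  \sigma(v) := \bigg(\avgint_Q |W^{-1}(x)v|^{p'}\,dx\bigg)^{1/p'}.
\]
Since $W(x)\in\Ss_d$ a.e.\ and $W\in\mathcal{A}_{p,q}$ forces both integrals to be finite for every $v\in\RR^d$, each functional is positive-definite, positively homogeneous, and (by Minkowski) subadditive; hence each is a norm on $\RR^d$. By John's ellipsoid theorem applied to the unit balls of $\rho$ and $\sigma$, I obtain symmetric positive-definite matrices $\V_Q^p$ and $\overline{\V}_Q^{p'}$ satisfying
$|\V_Q^p v|\le\rho(v)\le\sqrt{d}\,|\V_Q^p v|$ and
$|\overline{\V}_Q^{p'}v|\le\sigma(v)\le\sqrt{d}\,|\overline{\V}_Q^{p'}v|$
for all $v$.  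These are the first two asserted equivalences.

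To identify $\op{\V_Q^p\overline{\V}_Q^{p'}}$ with $[W]_{\mathcal{A}_{p,q}}^{1/q}$, I combine the column-sum identity~\eqref{eqn:alt-op-norm} with the reducing property of $\V_Q^p$:
\[
 \op{\V_Q^p\overline{\V}_Q^{p'}}^q
 \approx \sum_{i=1}^d |\V_Q^p\overline{\V}_Q^{p'}e_i|^q
 \approx \sum_{i=1}^d\avgint_Q|W(x)\overline{\V}_Q^{p'}e_i|^q\,dx
 \approx \avgint_Q\op{W(x)\overline{\V}_Q^{p'}}^q\,dx.
\]
The argument then reduces to establishing the pointwise equivalence
\[
  \op{W(x)\overline{\V}_Q^{p'}}
  \approx \bigg(\avgint_Q \op{W(x)W^{-1}(y)}^{p'}\,dy\bigg)^{1/p'}
  \qquad (\star)
\]
for a.e.\ $x\in Q$; raising $(\star)$ to the $q$-th power, averaging in $x$, and taking the supremum over $Q$ then yields the desired identification.

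The ``$\lesssim$'' direction of $(\star)$ is accessible by duality. Since $\overline{\V}_Q^{p'}$ is symmetric,
\[
 \op{W(x)\overline{\V}_Q^{p'}}
 = \sup_{|w|\le 1}\bigl|\overline{\V}_Q^{p'}W(x)w\bigr|
 \approx \sup_{|w|\le 1}\sigma(W(x)w),
\]
and the pointwise estimate $|W^{-1}(y)W(x)w|=|(W(x)W^{-1}(y))^T w|\le \op{W(x)W^{-1}(y)}\,|w|$ allows the supremum to be pulled outside the $L^{p'}$ average, giving the one-sided bound. The main obstacle is the reverse inequality: naive operator-norm bounds on $|W(x)W^{-1}(y)e_j|$ do not produce the right constant. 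The standard remedy is to test $\op{W(x)\overline{\V}_Q^{p'}}$ against extremal vectors of the John ellipsoid of $\sigma$ and then use the two-sided equivalence of operator and column-sum norms; this matrix-weight manipulation parallels the argument used to characterize matrix $\mathcal{A}_p$ in Proposition~\ref{prop:reducing-defn}, and is carried out in the off-diagonal setting in~\cite[Section~2]{MR4030471}.
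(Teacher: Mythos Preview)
The paper does not actually prove this proposition; it only states it and refers the reader to \cite[Section~2]{MR4030471}.  So there is no ``paper's own proof'' to compare against, and your outline is essentially the standard argument that appears in that reference: John's ellipsoid for the existence of the reducing matrices, then the column-sum identity~\eqref{eqn:alt-op-norm} to link $\op{\V_Q^p\overline{\V}_Q^{p'}}$ to the $\mathcal{A}_{p,q}$ characteristic.

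One comment on the execution: you single out the ``$\gtrsim$'' direction of $(\star)$ as the main obstacle and defer it to the cited reference, but in fact it falls out of the same device you already used.  Since $W(x)$ and $W(y)$ are symmetric, $W^{-1}(y)W(x)=(W(x)W^{-1}(y))^T$, so $\op{W(x)W^{-1}(y)}=\op{W^{-1}(y)W(x)}\approx\sum_{i}|W^{-1}(y)W(x)e_i|$ by~\eqref{eqn:alt-op-norm}.  Taking the $L^{p'}(Q)$ average in $y$ and using the reducing property of $\overline{\V}_Q^{p'}$ on each vector $W(x)e_i$ gives
\[
\bigg(\avgint_Q \op{W(x)W^{-1}(y)}^{p'}\,dy\bigg)^{1/p'}
\approx \sum_{i=1}^d \sigma(W(x)e_i)
\approx \sum_{i=1}^d |\overline{\V}_Q^{p'}W(x)e_i|
\approx \op{\overline{\V}_Q^{p'}W(x)}
=\op{W(x)\overline{\V}_Q^{p'}},
\]
which is the full two-sided equivalence $(\star)$ in one stroke.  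This is precisely the manipulation carried out (in the $\mathcal{A}_p$ setting) at~\eqref{eqn:reducing-est} of the present paper; no extremal-vector argument is required.

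Finally, your sketch tacitly assumes $p>1$: when $p=1$ one has $p'=\infty$, so $\sigma$ must be defined as an essential supremum rather than an $L^{p'}$ average.  The John ellipsoid step still goes through, but the endpoint deserves a sentence.
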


There is a connection between matrix and scalar $A_{p,q}$ weights.
The following result was proved in~\cite[Corollary~3.3]{MR4030471}.

\begin{prop} \label{prop:norm-scalar-Apq}
  Given $0<\alpha<n$ and $1\leq p<\frac{n}{\alpha}$,
define $q$ by $\frac{1}{p}-\frac{1}{q}=\frac{\alpha}{n}$.   If $W\in
\A_{p,q}$, then for every vector $v\in \RR^d$, the scalar weight
$w_v(x)=|W(x)v|^q$ is in $A_{p,q}$, and
\[ [W^q]_{A_\infty^{sc}} := \{ [w_v]_{A_\infty} : v \in \RR^d \} \leq
[W]_{\mathcal{A}_{p,q}}. \]
\end{prop}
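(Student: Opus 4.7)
The plan is to adapt the proof of Proposition~\ref{eqn:matrix-scalar-Ap} (the matrix $A_p$ case) to the fractional setting, the key tool being the pointwise inequality
\[ |W(y)v| = |W(y)W^{-1}(x)\cdot W(x)v| \leq \op{W(y)W^{-1}(x)}\cdot |W(x)v|, \]
valid for every $v \in \RR^d$ and a.e.\ $x,y \in \rn$, which uses only the identity $W(y)W^{-1}(x)\cdot W(x)v = W(y)v$ and the definition of the operator norm.

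First, I would rewrite this as $|W(x)v|^{-p'} \leq |W(y)v|^{-p'}\op{W(y)W^{-1}(x)}^{p'}$ and average in $x$ over a cube $Q$ to obtain
\[ \avgint_Q |W(x)v|^{-p'}\,dx \leq |W(y)v|^{-p'}\avgint_Q \op{W(y)W^{-1}(x)}^{p'}\,dx. \]
Next, raising both sides to the power $q/p'$, multiplying through by $|W(y)v|^q$, and averaging in $y$ over $Q$ yields
\[ \left(\avgint_Q |W(x)v|^{-p'}\,dx\right)^{\!q/p'}\!\avgint_Q|W(y)v|^q\,dy \leq \avgint_Q\left(\avgint_Q\op{W(y)W^{-1}(x)}^{p'}\,dx\right)^{\!q/p'}\!dy \leq [W]_{\mathcal{A}_{p,q}}. \]
The left-hand side is, in the paper's convention, the scalar $A_{p,q}$ characteristic on $Q$ of the weight $u_v(x) := |W(x)v|$, so taking the supremum over $Q$ gives $[u_v]_{A_{p,q}} \leq [W]_{\mathcal{A}_{p,q}}$.

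Finally, by the equivalence $u \in A_{p,q}$ iff $u^q \in A_{1+q/p'}$ recorded in Section~\ref{section:prelim} (the two characteristics coincide under the paper's conventions, since $r' = (p'+q)/q$ gives $(u^q)^{1-r'} = u^{-p'}$ and $r-1 = q/p'$), the weight $w_v := u_v^q = |W(\cdot)v|^q$ lies in $A_{1+q/p'} \subset A_\infty$, and hence
\[ [w_v]_{A_\infty} \leq [w_v]_{A_{1+q/p'}} = [u_v]_{A_{p,q}} \leq [W]_{\mathcal{A}_{p,q}}, \]
where the first inequality is the standard dimensional comparison between the Fujii--Wilson $A_\infty$ characteristic and any $A_r$ characteristic. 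Taking the supremum over $v \in \RR^d$ gives $[W^q]_{A_\infty^{sc}} \leq [W]_{\mathcal{A}_{p,q}}$, as claimed. The proof is essentially just exponent bookkeeping; the only mild obstacle is correctly assembling the three exponents $q$, $p'$, and $q/p'$ so that the two nested averages produced by the pointwise inequality match the matrix $\A_{p,q}$ quantity exactly.
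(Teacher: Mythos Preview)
The paper does not give its own proof of this proposition; it simply cites \cite[Corollary~3.3]{MR4030471}. Your argument is correct and is essentially the standard one found in that reference: the pointwise inequality $|W(y)v|\le \op{W(y)W^{-1}(x)}\,|W(x)v|$ is exactly the right tool, and your exponent bookkeeping is accurate, so that the double average you produce matches the definition of $[W]_{\mathcal{A}_{p,q}}$ on the nose.

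Two small remarks. First, as written your computation assumes $p'<\infty$, so the endpoint $p=1$ needs a separate (but entirely parallel) line: from $|W(y)v|^q\le \op{W(y)W^{-1}(x)}^{q}|W(x)v|^q$, average in $y$ over $Q$ and take $\esssup_{x\in Q}$ to obtain $[u_v]_{A_{1,q}}\le [W]_{\mathcal{A}_{1,q}}$ directly. Second, the comparison $[w]_{A_\infty}\le [w]_{A_r}$ for the Fujii--Wilson characteristic carries a dimensional constant in general; the displayed inequality in the proposition should be read as $\lesssim$, consistent with how the result is used later in the paper.
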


\medskip

We will consider the following weighted maximal operators which were
introduced in~\cite{MR2015733,MR4030471}.  If $1\leq
p<\infty$ and  $W\in \A_p$, given a vector-valued function $f \in
L^1_{loc}(\rn,\RR^d)$, define the Christ-Goldberg maximal
operator by
\[ M_Wf(x) := \sup_Q \avgint_Q
  |W^{\frac{1}{p}}(x)W^{-\frac{1}{p}}(y)f(y)|\,dy \cdot \rchi_Q(x).  \]
where the supremum is taken over all cubes $Q$. Similarly, given $0<\alpha<n$, $1<p<\frac{n}{\alpha}$, and $W\in
\A_{p,q}$, define the fractional Christ-Goldberg maximal operator by
\[ M_{W,\alpha} f(x) := \sup_Q |Q|^{\frac{\alpha}{n}}\avgint_Q
  |W(x)W^{-1}(y)f(y)|\,dy \cdot \rchi_Q(x).  \]
%


  

\section{Proof of Theorems~\ref{thm:sio-scalar}
  and~\ref{thm:fractional-scalar}}
\label{section:scalar}

In this section we prove our two scalar results, Theorems~\ref{thm:sio-scalar}
  and~\ref{thm:fractional-scalar}.  For both we will make use of the
  theory of sparse domination.   For complete details, we refer the
  reader
  to~\cite{CruzUribe:2016ji,MR3521084,MR4007575,lerner-IMRN2012,MR3127380}.
  Hereafter, let $\D$ denote a dyadic grid, and $\Ss\subset \D$ a sparse subset:
  that is, for every $Q\in \Ss$, there exists a set $E_Q\subset Q$,
  such that the sets $E_Q$ are pairwise disjoint and $ |Q| \leq 2
  |E_Q|$.
  
\subsection*{Proof of Theorem~\ref{thm:sio-scalar}}
Given a Calder\'on-Zygmund singular integral $T$ and $f\in L_c^\infty$, there
  exist a collection of dyadic grids $
  \{\D_k\}_{k=1}^{3^n}$ and sparse families $\Ss_k \subset \D_k$ such that
  \[ |Tf(x)| \leq C(T,n)\sum_{k=1}^{3^n} A_{\Ss_k}(|f|)(x), \]
  where $A_\Ss$ is the so-called sparse operator
\[
    A_{\mathcal{S}}f(x) = \sum_{Q \in \mathcal{S}}\avg{f}_Q\cdot \rchi_Q(x).
\]
The same estimate holds with the singular integral $T$ replaced by the
Hardy-Littlewood maximal operator $M$.  Therefore, to prove
Theorem~\ref{thm:sio-scalar} we will prove the following result.

\begin{theorem} \label{thm:sparse}
  Fix a dyadic grid $\D$ and a sparse set $\Ss\subset \D$.   Given $p \geq 1$ and $w \in A_p$, then for every $f \in L^p$ and $\lambda > 0$,
    \[
        |\{x \in \mathbb{R}^d : w(x)^{\frac{1}{p}}A_{\mathcal{S}}(\cdot\,w^{-\frac{1}{p}})(x) > \lambda \} \leq [w]_{A_p}[w]_{A_\infty}^p\left(\frac{\|f\|_{L^p}}{\lambda}\right)^p.
    \]
\end{theorem}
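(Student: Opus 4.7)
The approach is to combine a Calderón-Zygmund decomposition of $f^p$ with the sharp reverse Hölder inequality (Proposition~\ref{prop:sharp-rhi}) and the Carleson packing property of sparse families, in order to extract the desired constant $[w]_{A_p}[w]_{A_\infty}^p$. By standard approximation arguments we may assume $f \geq 0$ is compactly supported and $\Ss$ is finite. Setting $h := fw^{-1/p}$ (so that $\int h^p w\,dx = \|f\|_{L^p}^p$), the target becomes
\[ |\{x \in \rn : w(x)^{1/p} A_\Ss h(x) > \lambda\}| \leq C[w]_{A_p}[w]_{A_\infty}^p \lambda^{-p}\|f\|_{L^p}^p. \]
Apply the Calderón-Zygmund decomposition of $f^p$ at height $\lambda^p$ in the ambient dyadic grid $\D$, obtaining pairwise disjoint maximal dyadic cubes $\{P_j\}$ with $\lambda^p < \avgint_{P_j} f^p \leq 2^n \lambda^p$ and $f \leq \lambda$ almost everywhere on $\Omega^c$, where $\Omega := \bigcup_j P_j$. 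Then $|\Omega| \leq \lambda^{-p}\|f\|_{L^p}^p$, which already supplies the trivial contribution to the level set.

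Next, split $\Ss = \Ss_{\text{in}} \sqcup \Ss_{\text{out}}$ according to whether $Q$ is contained in some $P_j$ or not. The cubes of $\Ss_{\text{in}}$ support $A_\Ss h$ only on $\Omega$, so they do not contribute to the level set on $\Omega^c$. For every $Q \in \Ss_{\text{out}}$, the dyadic maximality of the $P_j$'s forces $\avgint_Q f^p \leq \lambda^p$ (either $Q$ properly contains some $P_j$, or $Q$ is disjoint from $\Omega$ and $f \leq \lambda$ pointwise on $Q$). Hölder's inequality followed by the $A_p$ condition then yields
\[ \langle h \rangle_Q \leq \lambda\Bigl(\avgint_Q w^{1-p'}\,dy\Bigr)^{1/p'} \leq \lambda [w]_{A_p}^{1/p}\Bigl(\avgint_Q w\,dy\Bigr)^{-1/p}. \]

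Inserting this pointwise bound for $x \in \Omega^c$ and applying Chebyshev's inequality, the proof reduces to the weighted $L^p$-estimate
\[ \int_{\rn} \Bigl(\sum_{Q \in \Ss_{\text{out}}}\Bigl(\tfrac{w(x)}{\avgint_Q w}\Bigr)^{1/p}\chi_Q(x)\Bigr)^{p}\,dx \lesssim [w]_{A_\infty}^p\cdot\lambda^{-p}\|f\|_{L^p}^p, \]
where the implicit constant must depend only on $[w]_{A_\infty}$ and not on $[w]_{A_p}$. To prove this I would invoke the sharp reverse Hölder inequality from Proposition~\ref{prop:sharp-rhi} (with exponent $\nu = 1 + c[w]_{A_\infty}^{-1}$ and $[w]_{RH_\nu}\leq 2$) to compare the pointwise factor $w(x)^{1/p}$ against the dyadic averages $\avgint_Q w$, and combine it with the Carleson packing estimate $\sum_{Q \in \Ss,\,Q \subset R} w(Q) \lesssim [w]_{A_\infty} w(R)$ that sparse families satisfy against any $A_\infty$ weight. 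Iterating the Carleson estimate $p$ times across the $p$-fold overlap of the sparse cubes produces the factor $[w]_{A_\infty}^p$.

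The principal obstacle is precisely this last step. A naive appeal to the strong-type $L^p(w)$ bound for $A_\Ss$ would only yield $[w]_{A_p}^{\max(1,\,1/(p-1))}$, which is strictly larger than the target $[w]_{A_p}[w]_{A_\infty}^p$ in exactly the range $1 < p < (1+\sqrt 5)/2$ where the theorem improves on consequences of the $A_2$ theorem. Isolating the sharper exponent $[w]_{A_\infty}^p$ forces us to run the sparse Carleson iteration using the specific reverse Hölder exponent $\nu = 1 + c[w]_{A_\infty}^{-1}$ rather than any larger exponent expressible through $[w]_{A_p}$, and managing this iteration cleanly through the $p$-fold sum is the technical heart of the argument.
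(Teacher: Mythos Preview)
Your reduction breaks down at the displayed inequality
\[
\int_{\rn}\Bigl(\sum_{Q\in\Ss_{\text{out}}}\bigl(w(x)/\langle w\rangle_Q\bigr)^{1/p}\rchi_Q(x)\Bigr)^{p}dx
\;\lesssim\;[w]_{A_\infty}^{p}\,\lambda^{-p}\|f\|_{L^p}^p,
\]
which is false in general. Take $w\equiv 1$, $p=1$, $n=1$, $f=\rchi_{[0,1]}$, $\lambda=\tfrac12$, and the sparse chain $\Ss=\{[0,2^k]:0\le k\le N\}$. Then $\Omega=[0,1]$, $\Ss_{\text{out}}=\{[0,2^k]:1\le k\le N\}$, and the left side equals $\int\sum_{Q\in\Ss_{\text{out}}}\rchi_Q\,dx\gtrsim 2^{N}$, while the right side is $2$. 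The problem is structural: by replacing $\langle f\rangle_{p,Q}$ with the uniform bound $\lambda$ you discard all decay of the averages on large cubes, and after Chebyshev nothing remains to control the total mass of $\Ss_{\text{out}}$. The Carleson packing estimate you invoke only bounds $\sum_{Q\subset R,\,Q\in\Ss}w(Q)\lesssim[w]_{A_\infty}w(R)$ for a \emph{fixed} top cube $R$; here there is no top cube, so no iteration of that estimate can close the argument.

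The paper avoids this by working through the dual description
\[
\|T\|_{L^p\to L^{p,\infty}}\approx\sup_{\|f\|_{L^p}=1}\ \sup_{0<|E|<\infty}\ \inf_{\substack{E'\subset E\\ |E|\le 2|E'|}}|E|^{\frac1p-1}\bigl|\langle Tf,\rchi_{E'}\rangle\bigr|,
\]
so that every sum is paired against $\rchi_{E'}$ with $|E'|<\infty$; this supplies the localization your approach lacks. The Calder\'on--Zygmund decomposition of $f^p$ is then run at height $K/|E|$, one sets $E'=E\setminus\Omega$, and one \emph{keeps} the good part: for surviving cubes $\langle f\rangle_{p,Q}=\langle g\rangle_Q^{1/p}$. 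The weight is removed inside the pairing by applying H\"older with exponent $p\nu$ (with $\nu=1+c[w]_{A_\infty}^{-1}$ the sharp reverse H\"older exponent) to $\langle w^{1/p}\rchi_{E'}\rangle_Q$, so that the $A_p$ condition combined with reverse H\"older collapses $\langle w^{-1/p}\rangle_{p',Q}\langle w\rangle_{\nu,Q}^{1/p}$ to $[w]_{A_p}^{1/p}$ and one is left with $\sum_Q\langle g\rangle_Q^{1/p}\langle\rchi_{E'}\rangle_{(p\nu)',Q}|E_Q|$. This is dominated by $\int(Mg)^{1/p}(M\rchi_{E'})^{1/(p\nu)'}dx$ and handled by a single H\"older step with exponent $r$ chosen so that $r'=p\nu'+1$, using $\|M\|_{L^r\to L^r}\lesssim r'$; the factor $[w]_{A_\infty}$ comes out of $(r')^r\lesssim\nu'\lesssim[w]_{A_\infty}$, not from iterating a Carleson condition.
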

\begin{proof}
  To prove this result we will use the equivalence
    \[
        \|w^{\frac1p}A_\mathcal{S}(w^{-\frac1p}\,\cdot)\|_{L^p \rightarrow L^{p,\infty}} \approx \sup_{\|f\|_{L^p} = 1}\,\sup_{0<|E|<\infty}\,\inf_{\substack{E'\subseteq E \\ |E| \leq 2 |E'|}}|E|^{\frac{1}{p}-1}|\langle w^{\frac1p}A_\mathcal{S}(w^{-\frac1p}f),\rchi_{E'}\rangle| 
    \]
    (see~\cite[Exercise~1.4.14]{grafakos08a}). Fix a function
    $f \in L^p$ with $\|f\|_{L^p} = 1$; since $A_\Ss$ is a positive
    operator, without loss of generality we may assume that $f$ is
    non-negative.  Let $E \subset \mathbb{R}^n$ with
    $0 < |E| < \infty$. For a positive constant $K$ (to be fixed
   below), let
    \[
        \Omega := \{x \in \mathbb{R}^n : M^\D(f^p)(x) > K/|E|\}.
      \]
where $M^\D$ is the dyadic maximal operator defined with respect to the
dyadic grid $\D$.  Form the Calderón-Zygmund decomposition of $f^p$
(with respect to the same grid $\D$) at height $K/|E|$; then we obtain a
collection of disjoint cubes $\{Q_j\}$ in $\D$ and functions $g$ and $b$ such that 
    \[
        \Omega = \bigcup_j Q_j;\quad f^p = g + b;\quad \|g\|_{L^1} \lesssim 1;\quad \|g\|_{L^\infty} \lesssim K/|E|;\quad \text{supp}(b) \subset \Omega; \quad \avg{b}_{Q_j} = 0.
    \]
    Since $M^\D$ is weak-type $(1,1)$ with
    $\|M^\D\|_{L^1\rightarrow L^{1,\infty}}=1$, and $\|f\|_{L^p} = 1$,
    if we fix $K>2$, 
    \[
      |\Omega| = |\{x \in \mathbb{R}^n : M^D(f^p)(x) > K/|E|\}
      \leq \|M^\D\|_{L^1\rightarrow L^{1,\infty}}|E|/K < |E|/2.
    \]
    Let $E' := E \setminus \Omega$; then $|E'| > |E|/2$.

    Since $w\in A_p$, by Proposition~\ref{prop:sharp-rhi}, let
    $\nu = 1 + c[w]_{A_\infty}^{-1}$ be the sharp reverse Hölder
    exponent of $w$. Fix $r$ such that $r' = p\nu' + 1$. One can
    easily verify that $r$ satisfies
    \[
        1 < r < \nu;\qquad (r')^r \lesssim \nu' \lesssim [w]_{A_\infty};\qquad \frac{(pr)'}{(p\nu)'} =  \frac{1}{p} + \frac{1}{(p\nu)'} = r.
    \]

    We can now apply  Hölder's inequality twice with exponents $p$ and $p\nu$, the definition of$A_p$, 
    the reverse H\"older inequality,  and the sparseness of the
    collection $\mathcal{S}$ to show that
    \begin{align}
      |\langle
      w^{\frac1p}A_\mathcal{S}(fw^{-\frac1p}),\rchi_{E'}\rangle|
      &=  \sum_{Q \in \mathbb{S}}\avg{fw^{-\frac{1}{p}}}_Q
        \avg{w^\frac{1}{p}\rchi_{E'}}_Q|Q| \notag\\
      &= \sum_{Q \in
        \mathcal{S}}\avg{f}_{p,Q}\avg{w^{-\frac{1}{p}}}_{p',Q}
        \avg{w}_{\nu,Q}^\frac{1}{p}\avg{\rchi_{E'}}_{(p\nu)',Q}|Q|\notag
      \\
      &\leq [w]_{A_p}^\frac{1}{p} \sum_{Q \in \mathcal{S}}
        \avg{f}_{p,Q}\avg{w}_Q^{-\frac{1}{p}}\avg{w}_{\nu,Q}^\frac{1}{p}
        \avg{\rchi_{E'}}_{(p\nu)',Q}|Q| \notag\\
      &\lesssim [w]_{A_p}^\frac{1}{p} \sum_{Q \in
        \mathcal{S}}\avg{f}_{p,Q}
        \avg{\rchi_{E'}}_{(p\nu)',Q}|E_Q|. \label{eqn:key-step}
    \end{align}
    
    Now let $Q \in \mathcal{S}$. If $Q \subset \Omega$, then
    $\avg{\rchi_{E'}}_Q = 0$, since $E' \cap \Omega = \emptyset$;
    therefore, those non-zero terms in the sum above
    correspond to  $Q$ that  intersect $\rn \setminus \Omega$. For such $Q$, if
    $Q \cap Q_j \neq \emptyset$ then either $Q \subseteq Q_j$ or
    $Q_j \subseteq Q$; since $Q_j \subset \Omega$, we must have that
    $Q_j \subset Q$.  Hence, 
    \begin{equation}\label{eqn:decomp-est}
      \avg{f}_{p,Q} = (\avg{g}_Q + \avg{b}_Q)^\frac{1}{p}
      = (\avg{g}_Q + |Q|^{-1}\sum_{Q_j \subseteq Q}
      b(Q_j))^\frac{1}{p}
      = \avg{g}_Q^\frac{1}{p},
    \end{equation}
    since $\avg{b}_{Q_j} = 0$ for any $Q_j$.  Therefore, we can
    estimate the final term above as follows: by H\"older's
    inequality with exponent $pr$, and the norm bound for the
    maximal operator,

    \begin{align*}
    \sum_{Q \in
        \mathcal{S}}\avg{f}_{p,Q}
      \avg{\rchi_{E'}}_{(p\nu)',Q}|E_Q|
      & =  \sum_{Q \in \mathcal{S}}\avg{g}_Q^\frac{1}{p}\avg{\rchi_{E'}}_{(p\nu)',Q}|E_Q|\\
        &\leq \sum_{Q \in \mathcal{S}}\int_{E_Q}(Mg)^\frac{1}{p}(M\rchi_{E'})^\frac{1}{(p\nu)'}\,dx\\
        &\leq \|Mg\|_{L^r}^\frac{1}{p}\|M\rchi_{E'}\|_{L^{r}}^\frac{1}{(p\nu)'}\\
        &\lesssim (r')^\frac{1}{p}(r')^\frac{1}{(p\nu)'}\|g\|_{L^r}^\frac{1}{p}|E'|^\frac{1}{(pr)'}\\
        &\lesssim[w]_{A_\infty}(\|g\|_{L^\infty}^\frac{1}{r'}\|g\|_{L^1}^\frac{1}{r})^\frac{1}{p}|E'|^\frac{1}{(pr)'}\\
        &= [w]_{A_\infty}|E|^{\frac{1}{(pr)'}-\frac{1}{pr'}}\\
        &\lesssim [w]_{A_\infty}|E|^{1-\frac{1}{p}}.
    \end{align*}
    If we combine these two estimates, we get
    \[
        \|w^{\frac1p}A_\mathcal{S}(\cdot\,w^{-\frac1p})\|_{L^p \rightarrow L^{p,\infty}} \lesssim [w]_{A_p}^\frac{1}{p}[w]_{A_\infty},
    \]
    as desired.
  \end{proof}

  \subsection*{Proof of Theorem~\ref{thm:fractional-scalar}}
  As with singular integrals and the Hardy-Littlewood maximal
  operator, we have the following pointwise domination estimate: given
  $0<\alpha<n$ and non-negative $f\in L^\infty_c$, there exist a
  collection of dyadic grids $ \{\D_k\}_{k=1}^{3^n}$ and sparse families
  $\Ss_k \subset \D_k$ such that
  \[ |I_\alpha f(x)| \leq C(\alpha,n)\sum_{k=1}^{3^n}
    A_{\mathcal{S}_k}^\alpha(|f|)(x), \]
  where $A_\Ss^\alpha$ is the fractional  sparse operator
\[
    A_{\mathcal{S}}f(x) = \sum_{Q \in \mathcal{S}}|Q|^{\frac{\alpha}{n}}\avg{f}_Q\cdot \rchi_Q(x).
\]
The same estimate holds with $I_\alpha$ replaced by the
fractional maximal operator $M_\alpha$.  Therefore, to prove
Theorem~\ref{thm:fractional-scalar} we will prove the following result.

\begin{theorem} \label{thm:sparse-fractional}
  Fix a dyadic grid $\D$ and a sparse set $\Ss\subset \D$.   Fix
  $0<\alpha<n$ and  $1 \leq p < \frac{n}{\alpha}$; define $q$ by
  $\frac{1}{p}-\frac{1}{q}=\frac{\alpha}{n}$.  Let $w \in A_{p,q}$;
  then  for every non-negative $f \in L^p$ and $\lambda > 0$,
    \[
        |\{x \in \mathbb{R}^d : w(x)A_{\mathcal{S}}^\alpha(fw^{-1})(x) > \lambda \} \leq [w]_{A_{p,q}}[w^q]_{A_\infty}^q\left(\frac{\|f\|_{L^p}}{\lambda}\right)^q.
    \]
  \end{theorem}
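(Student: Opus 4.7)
The plan is to follow the proof of Theorem~\ref{thm:sparse} with structural changes at the places where the fractional weight class and the factor $|Q|^{\alpha/n}$ enter, together with one genuinely new pointwise bound. By the duality characterization of $L^{q,\infty}$ (the exact analogue of the one used for Theorem~\ref{thm:sparse}), it suffices to show that for every non-negative $f \in L^p$ with $\|f\|_{L^p} = 1$ and every measurable $E$ with $0 < |E| < \infty$ there is $E' \subseteq E$ with $|E'| > |E|/2$ satisfying
\[ |\langle w A_\Ss^\alpha(fw^{-1}), \rchi_{E'}\rangle| \lesssim [w]_{A_{p,q}}^{1/q}\,[w^q]_{A_\infty}\,|E|^{1-1/q}. \]
As in the scalar case, I would take $E' = E \setminus \Omega$, where $\Omega$ is obtained from the dyadic Calder\'on-Zygmund decomposition $f^p = g + b$ at height $K/|E|$ for a fixed $K > 2$; this yields $|\Omega| < |E|/2$ together with $\|g\|_{L^\infty} \lesssim K/|E|$, $\|g\|_{L^1} \lesssim 1$, and $\avg{b}_{Q_j} = 0$ on the decomposition cubes.

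The pairing expands as $\sum_{Q \in \Ss}|Q|^{\alpha/n}\avg{fw^{-1}}_Q \avg{w\rchi_{E'}}_Q|Q|$. Since $w^q$ lies in $A_\infty$ by Proposition~\ref{prop:norm-scalar-Apq}, Proposition~\ref{prop:sharp-rhi} gives a sharp reverse H\"older exponent $\nu = 1 + c[w^q]_{A_\infty}^{-1}$ with $[w^q]_{RH_\nu} \leq 2$. Applying H\"older with exponents $(p,p')$ to $\avg{fw^{-1}}_Q$ and $(q\nu,(q\nu)')$ to $\avg{w\rchi_{E'}}_Q$, using the reverse H\"older bound $\avg{w^{q\nu}}_Q^{1/(q\nu)} \leq 2^{1/q}\avg{w^q}_Q^{1/q}$, and dominating $\avg{w^q}_Q^{1/q}\avg{w^{-p'}}_Q^{1/p'} \leq [w]_{A_{p,q}}^{1/q}$ by the definition of $A_{p,q}$ mirrors the use of $A_p$ and the reverse H\"older inequality in the scalar proof. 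Combining this with sparseness $|Q| \leq 2|E_Q|$ and the identity $\avg{f}_{p,Q} = \avg{g}_Q^{1/p}$ on cubes meeting $E'$ (exactly as in \eqref{eqn:decomp-est}) reduces the pairing to
\[ [w]_{A_{p,q}}^{1/q}\sum_{Q\in\Ss}|Q|^{\alpha/n}\avg{g}_Q^{1/p}\avg{\rchi_{E'}}_{(q\nu)',Q}|E_Q|. \]

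The new ingredient is a Hedberg-type pointwise bound that absorbs the factor $|Q|^{\alpha/n}$: for $x \in Q$,
\[ |Q|^{\alpha/n}\avg{g}_Q^{1/p} \leq (M_{\alpha p}g(x))^{1/p} \leq \|g\|_{L^1}^{\alpha/n}(Mg(x))^{(1-\alpha p/n)/p} \lesssim (Mg(x))^{1/q}, \]
using the identity $(1-\alpha p/n)/p = 1/q$ and $\|g\|_{L^1}\lesssim 1$; the middle inequality is the elementary optimization of $\avg{g}_Q \leq Mg(x)$ against $\avg{g}_Q \leq |Q|^{-1}\|g\|_{L^1}$ over cubes $Q \ni x$. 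Together with $\avg{\rchi_{E'}}_{(q\nu)',Q} \leq (M\rchi_{E'}(x))^{1/(q\nu)'}$ and disjointness of the $E_Q$, this reduces the remaining sum to $\int_\rn (Mg)^{1/q}(M\rchi_{E'})^{1/(q\nu)'}$. From this point on the computation is a literal transcription of the scalar proof with $p$ replaced by $q$ throughout: choosing $r$ by $r' = q\nu' + 1$ makes $1/q + 1/(q\nu)' = r$, so H\"older with exponents $qr$ and $r(q\nu)'$ splits the integral into $\|Mg\|_{L^r}^{1/q}\|M\rchi_{E'}\|_{L^r}^{1/(q\nu)'}$; the $L^r \to L^r$ norm $\lesssim r'$ of $M$ combined with the interpolation $\|g\|_{L^r}\leq \|g\|_{L^\infty}^{1/r'}\|g\|_{L^1}^{1/r}$ and the bounds on $\|g\|_{L^\infty}$, $\|g\|_{L^1}$, $|E'|$ yield a prefactor $(r')^r \lesssim [w^q]_{A_\infty}$ and residual powers of $|E|$; the identity $1/(r(q\nu)') - 1/(qr') = 1 - 1/q$ then produces the desired $|E|^{1-1/q}$. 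The main obstacle is identifying the right way to dispose of the $|Q|^{\alpha/n}$ factor so that the remainder of the non-fractional proof carries over unchanged; the Hedberg bound is the cleanest route and avoids having to track the sharp $L^p$-behavior of $M_{\alpha p}$ near the endpoint.
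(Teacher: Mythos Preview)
Your argument is correct and follows the same overall scaffold as the paper's proof---duality characterization of $L^{q,\infty}$, Calder\'on--Zygmund decomposition of $f^p$, H\"older plus the $A_{p,q}$ and sharp reverse H\"older bounds for $w^q$, and the same choice $r'=q\nu'+1$---but you diverge from the paper at the point where the factor $|Q|^{\alpha/n}$ must be absorbed. The paper keeps the fractional structure: it writes $|Q|^{\alpha/n}\avg{g}_Q^{1/p}\leq (M_\beta^\D g(x))^{1/p}$ with $\beta=\alpha p$, introduces an auxiliary exponent $t$ via $\tfrac{1}{t}-\tfrac{p}{qr}=\tfrac{\beta}{n}$, and invokes Lemma~\ref{lemma:Malpha-est} to control $\|M_\beta^\D g\|_{L^{qr/p}}$ in terms of $\|g\|_{L^t}$ with operator norm $\lesssim (r')^{p/q}$, then interpolates $\|g\|_{L^t}$ between $L^1$ and $L^\infty$. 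You instead use the Hedberg-type pointwise bound $|Q|^{\alpha p/n}\avg{g}_Q\leq \|g\|_{L^1}^{\alpha p/n}(Mg(x))^{1-\alpha p/n}$, which together with $(1-\alpha p/n)/p=1/q$ and $\|g\|_{L^1}\lesssim 1$ immediately gives $(Mg(x))^{1/q}$; from there the computation is exactly the non-fractional proof with $p$ replaced by $q$.

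Your route is genuinely more elementary: it bypasses Lemma~\ref{lemma:Malpha-est} and the auxiliary exponent $t$ entirely, and makes the analogy with Theorem~\ref{thm:sparse} literal rather than structural. The paper's approach, on the other hand, stays closer to the natural fractional scaling and would be the one to use if $\|g\|_{L^1}$ were not already controlled (the Hedberg step spends the $L^1$ bound on $g$ at the pointwise level, whereas the paper spends it only at the end via $\|g\|_{L^t}\leq\|g\|_{L^\infty}^{1/t'}\|g\|_{L^1}^{1/t}$). Both yield the same constant $[w]_{A_{p,q}}^{1/q}[w^q]_{A_\infty}$. One small remark: your citation of Proposition~\ref{prop:norm-scalar-Apq} for $w^q\in A_\infty$ is to the matrix statement; in the scalar setting the paper simply notes that $w\in A_{p,q}$ is equivalent to $w^q\in A_{1+q/p'}\subset A_\infty$.
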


 Let $M_\alpha^\D$ denote the version of the fractional maximal
operator defined with respect to cubes in the dyadic grid $\D$.  To
prove Theorem~\ref{thm:sparse-fractional}, we will need the following
estimate which was proved in~\cite[Theorem~2.3]{MR3000426}.

\begin{lemma} \label{lemma:Malpha-est}
  Let $\D$ be a dyadic grid.  Given $0<\alpha<n$ and $1<p<\frac{n}{\alpha}$ define $q$ by
  $\frac{1}{p}-\frac{1}{q}=\frac{\alpha}{n}$.  Then for every $f \in
  L^p$,
  \[ \|M_\alpha^\D f\|_{L^q}
    \leq \left(1+\frac{p'}{q}\right)^{1-\frac{\alpha}{n}} \|f\|_{L^p}. \]
\end{lemma}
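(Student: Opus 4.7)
The plan is to establish the lemma in two steps: first derive a dyadic weak-type bound with explicit constant $1$ via a maximal dyadic cube selection, then upgrade to a strong-type bound by a Kolmogorov/Marcinkiewicz-style truncation argument, tracking constants carefully to produce the sharp factor $(1+p'/q)^{1-\alpha/n}$.

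For the weak-type step, assume $f \geq 0$ and fix $\lambda > 0$. Decompose $\{M_\alpha^\D f > \lambda\}$ as the disjoint union of the maximal dyadic cubes $\{Q_j^\lambda\}$ for which $|Q_j^\lambda|^{\alpha/n}\avgint_{Q_j^\lambda} f > \lambda$. Rewrite this as $\int_{Q_j^\lambda} f > \lambda |Q_j^\lambda|^{1-\alpha/n}$ and apply H\"older's inequality on the left to get
\[
\lambda |Q_j^\lambda|^{1/q} < \Bigl(\int_{Q_j^\lambda} f^p\Bigr)^{1/p},
\]
where we used $1/q = 1-\alpha/n - 1/p'$. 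Raising to the $q$th power, summing over $j$, and applying super-additivity of $x \mapsto x^{q/p}$ (valid since $q/p>1$) yields the weak-type bound $|\{M_\alpha^\D f > \lambda\}| \leq (\|f\|_{L^p}/\lambda)^q$ with constant $1$; specializing $p=1$ gives the same inequality at $(1,n/(n-\alpha))$.

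For the upgrade to strong type, normalize $\|f\|_{L^p}=1$ and split $f = f_0 + f_1$ at a threshold $c(\lambda)$, with $f_0 = f\rchi_{\{f > c(\lambda)\}}$. Using the dyadic Hedberg-type estimate $M_\alpha^\D f_1(x) \lesssim M^\D f_1(x)^{1-\alpha p/n}\|f_1\|_{L^p}^{\alpha p/n}$ (obtained by splitting the supremum defining $M_\alpha^\D f_1$ into cubes of small and large volume and balancing) together with $M^\D f_1 \leq \|f_1\|_\infty \leq c(\lambda)$, one can pick $c(\lambda) \asymp \lambda^{q/p}$ so that $M_\alpha^\D f_1 \leq \lambda/2$ pointwise. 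Then $\{M_\alpha^\D f > \lambda\} \subseteq \{M_\alpha^\D f_0 > \lambda/2\}$, and inserting the weak $(1, n/(n-\alpha))$ bound for $f_0$ into the layer-cake formula
\[
\|M_\alpha^\D f\|_{L^q}^q = q\int_0^\infty \lambda^{q-1}|\{M_\alpha^\D f > \lambda\}|\,d\lambda
\]
and interchanging the order of integration gives a strong-type estimate.

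The main obstacle is nailing down the \emph{exact} constant $(1+p'/q)^{1-\alpha/n} = (p'(1-\alpha/n))^{1-\alpha/n}$. A naive combination of the dyadic Hedberg inequality with the sharp bound $\|M^\D\|_{L^p\to L^p}=p'$ only produces a constant of order $(p')^{p/q}$, which is strictly larger (e.g.\ at $p=2$, $\alpha=1$, $n=3$: $(p')^{p/q}=2^{1/3}\approx 1.26$ versus $(4/3)^{2/3}\approx 1.21$). To recover the sharp factor, I would instead apply Marcinkiewicz interpolation between the two weak-type endpoints $(1, n/(n-\alpha))$ and $(p_1,q_1)$ for $p_1\uparrow p$, both with constant $1$ by the computation above, and then optimize the splitting level and the parameter $p_1$ variationally; this is reminiscent of the Bellman-function derivation of $\|M^\D\|_{L^p\to L^p}=p'$ in the $\alpha=0$ case, to which our bound reduces correctly.
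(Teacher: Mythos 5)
Your first step is fine: the maximal-cube decomposition of $\Omega_\lambda=\{M_\alpha^\D f>\lambda\}$, H\"older on each cube, and superadditivity of $t\mapsto t^{q/p}$ do give the weak $(p,q)$ bound with constant $1$. Note that the paper itself does not prove this lemma but quotes it from \cite{MR3000426}, so the only question is whether your argument actually delivers the stated strong-type constant, and it does not. Moreover, by applying H\"older on each maximal cube you discard the refinement that the sharp constant requires: the same decomposition gives, with no H\"older at all, $|\Omega_\lambda|\le\big(\lambda^{-1}\int_{\Omega_\lambda}|f|\,dx\big)^{s}$ with $s=\frac{n}{n-\alpha}$, and keeping the integral of $|f|$ over the level set (rather than over all of $\rn$) is exactly what makes the strong-type argument close.

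The genuine gap is in the second step. You concede that your concrete route (dyadic Hedberg plus $\|M^\D\|_{L^p\to L^p}\le p'$) only yields $(p')^{p/q}$, and the proposed repair is not a proof. As written, interpolating between weak $(1,\tfrac{n}{n-\alpha})$ and weak $(p_1,q_1)$ with $p_1\uparrow p$ is vacuous, since the target exponent $p$ then lies outside the interval $(1,p_1)$ where Marcinkiewicz interpolation applies; taking $p_1>p$ instead, the interpolation constant degenerates as $p_1\downarrow p$ and for fixed $p_1$ carries the lossy factors coming from the splitting $f=f_0+f_1$ and the layer-cake integrals, so there is no reason---and you give none---that optimizing over $p_1$ produces exactly $(1+p'/q)^{1-\alpha/n}$. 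The way to finish is the direct argument behind the cited result, which generalizes the classical proof that $\|M^\D\|_{L^p\to L^p}\le p'$ (no Bellman function is needed): starting from the refined weak-type bound above,
\[
\|M_\alpha^\D f\|_{L^q}^q=q\int_0^\infty\lambda^{q-1}|\Omega_\lambda|\,d\lambda\le q\int_0^\infty\lambda^{q-1-s}\Big(\int_{\Omega_\lambda}|f|\,dx\Big)^{s}\,d\lambda,
\]
apply Minkowski's integral inequality (legitimate since $s\ge1$) to bring the $x$-integral outside, compute $\int_0^{M_\alpha^\D f(x)}\lambda^{q-1-s}\,d\lambda=(q-s)^{-1}\big(M_\alpha^\D f(x)\big)^{q-s}$ (note $q>s$ precisely because $p>1$), and then use H\"older with exponents $(p,p')$. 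The identity $1-\frac{\alpha}{n}=\frac{1}{p'}+\frac{1}{q}$ gives $(q-s)\frac{p'}{s}=q$, so the resulting factor $\|M_\alpha^\D f\|_{L^q}^{qs/p'}$ can be absorbed (argue first for truncated maximal operators or bounded compactly supported $f$ to ensure finiteness), leaving $\|M_\alpha^\D f\|_{L^q}\le\big(\tfrac{q}{q-s}\big)^{1/s}\|f\|_{L^p}=\big(1+\tfrac{p'}{q}\big)^{1-\frac{\alpha}{n}}\|f\|_{L^p}$.
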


\begin{proof}[Proof of Theorem~\ref{thm:sparse-fractional}]
    We first prove the case for $p > 1$. To prove this theorem, we will again use the equivalence
    \[
        \|wA^\alpha_\mathcal{S}(\cdot\,w^{-1})\|_{L^p \rightarrow L^{q,\infty}} \approx \sup_{\|f\|_{L^p} = 1}\,\sup_{0<|E|<\infty}\,\inf_{\substack{E'\subseteq E \\ |E| \leq 2 |E'|}}|E|^{\frac{1}{q}-1}|\langle wA^\alpha_\mathcal{S}(fw^{-1}),\rchi_{E'}\rangle|.
    \]
    We now argue as we did in the proof of Theorem~\ref{thm:sparse}.
    Fix $f \in L^p$, $\|f\|_{L^p} = 1$, and
    $E \subset \mathbb{R}^n$ with $0 <|E| < \infty$.  Define the
    sets $\Omega$ and $E'$ and functions $g$ and $b$ exactly as we
    did before. Since $w\in A_{p,q}$, $w^q \in A_\infty$ so we can again
    apply Proposition~\ref{prop:sharp-rhi} and let $\nu = 1 + c[w^q]_{A_\infty}^{-1}$ be the sharp reverse
    Hölder exponent of $w^q$.  Fix $r$  such that $r' = q\nu' +
    1$; then  we again have that $r$ satisfies
    \[
        1 < r < \nu;\qquad (r')^r \lesssim \nu' \lesssim [w^q]_{A_\infty};\qquad  \frac{(qr)'}{(q\nu)'} = \frac{1}{q} + \frac{1}{(q\nu)'} = r. 
    \]
      
    Let $\beta := \alpha p$ and fix $t$  such that
    $\frac{1}{t} - \frac{p}{qr} = \frac{\beta}{n}$.  Then
    $0 < \beta < n$ and $1 \leq t < n/\beta$. Hence, by Lemma~\ref{lemma:Malpha-est} the fractional
    maximal operator $M_\beta^\D$ is bounded from $L^t$ to $L^{qr/p}$
    with operator norm at most
    \[ \left(1+ \frac{t'p}{qr}\right)^{1-\frac{\beta}{n}}
      =
      \bigg(t'\bigg(1-\frac{\beta}{n}\bigg)\bigg)^{1-\frac{\beta}{n}}.
    \]
    By the definition of $\beta$,  $1 - \frac{p}{q} =
    \frac{\beta}{n}$, so by the definition of $t$ we have that
    $\frac{p}{qr'} = \frac{1}{t'}$.  Thus
    \[
       \bigg(t'\bigg(1-\frac{\beta}{n}\bigg)\bigg)^{1-\frac{\beta}{n}}
       \leq (r')^{\frac{p}{q}}.
      \]
      
    We can now argue as before, using Hölder's inequality, the
    definition of $A_{p, q}$, the 
    reverse H\"older inequality, equation~\eqref{eqn:decomp-est},
    H\"older's inequality with exponent $qr$, and the
    norm bounds for the maximal and fractional maximal operators
    to get
    \begin{align*}
        |\langle wA^\alpha_\mathcal{S}(fw^{-1}),\rchi_{E'}\rangle| &= \sum_{Q \in \mathcal{S}}\avg{fw^{-1}}_Q\avg{w\rchi_{E'}}_Q|Q|^{1+\frac{\alpha}{n}}\\
        &= \sum_{Q \in \mathcal{S}}\avg{f}_{p,Q}\avg{w^{-1}}_{p',Q}\avg{w}_{(q\nu),Q}\avg{\rchi_{E'}}_{(q\nu)',Q}|Q|^{1+\frac{\alpha}{n}}\\
        &\leq [w]_{A_{p,q}}^\frac{1}{q}\sum_{Q \in \mathcal{S}}\avg{f}_{p,Q}\avg{w}_{q,Q}^{-1}\avg{w}_{(q\nu),Q}\avg{\rchi_{E'}}_{(q\nu)',Q}|Q|^{1 + \frac{\alpha}{n}}\\
        &\lesssim [w]_{A_{p,q}}^\frac{1}{q}\sum_{Q \in
          \mathcal{S}}\int_{E_Q}(M_\beta ^\D g)^\frac{1}{p}(M \rchi_{E'})^\frac{1}{(q\nu)'}\,dx\\
        &\leq [w]_{A_{p,q}}^\frac{1}{q}\|M_\beta ^\D g\|_{L^{qr/p}}^\frac{1}{p}\|M\rchi_{E'}\|_{L^r}^\frac{1}{(q\nu)'}\\
        &\leq [w]_{A_{p,q}}^\frac{1}{q}(r')^\frac{1}{q}(r')^\frac{1}{(q\nu)'}\|g\|_{L^t}^\frac{1}{p}|E'|^\frac{1}{(qr)'}\\
        &\lesssim [w]_{A_{p,q}}^\frac{1}{q}[w^q]_{A_\infty}(\|g\|_{L^\infty}^\frac{1}{t'}\|g\|_{L^1}^\frac{1}{t})^\frac{1}{p}|E'|^\frac{1}{(qr)'}\\
        &\lesssim [w]_{A_{p,q}}^\frac{1}{q}[w^q]_{A_\infty}|E|^{\frac{1}{(qr)'}-\frac{1}{pt'}}\\
        &= [w]_{A_{p,q}}^\frac{1}{q}[w^q]_{A_\infty}|E|^{1 - \frac{1}{q}}.
    \end{align*}
    Therefore, 
    \[
        \|w A^\alpha_\mathcal{S}(\cdot\,w^{-1})\|_{L^p \rightarrow L^{q,\infty}} \lesssim [w]_{A_{p,q}}^\frac{1}{q}[w^q]_{A_\infty},
    \]
    as desired.

    \medskip

    The proof of  this result when $p=1$, is essentially the
    same.   But, instead of using H\"older's
    inequality to replace  $\langle fw^{-1}\rangle_{Q}$ with
    $\langle f\rangle_{p,Q}\langle w^{-1}\rangle_{p',Q}$, use the estimate
    $\langle fw^{-1}\rangle_{Q}\leq \|w^{-1}\|_{L^\infty(Q)} \langle
      f\rangle_{Q}$.  The argument then continues as before, using the
      definition of $A_{1,q}$.  
\end{proof}

\section{Proof of Theorems~\ref{thm:sio-matrix}
  and~\ref{thm:fractional-matrix}}
\label{section:matrix}

In this section we prove our two matrix weighted results, Theorems~\ref{thm:sio-matrix}
  and~\ref{thm:fractional-matrix}.  We will again make use of the
  theory of sparse domination and again in this section $\D$ will
  denote a dyadic grid and $\Ss$ a sparse subset of $\D$.

  \subsection*{Proof of Theorem~~\ref{thm:sio-matrix}}
  We will first prove this for singular integrals.   The proof for the
  Christ-Goldberg maximal operator is nearly identical and is given below.
  We adapt the argument used in~\cite{MR4269407} when $p=1$.  Fix
  $1<p<\infty$ and $W\in \A_p$.  As a
  consequence of the convex body sparse domination theorem of Nazarov,
  Petermichl, Treil, and Volberg~\cite{MR3689742}, if $T$ is a
  Calder\'on-Zygmund singular integral,  given a function
  $f\in L^\infty_c(\rn,\RR^d)$, there exist sparse
  families $\Ss_j$, $1\leq j\leq 3^d$, such that
  \begin{align*}
    |W^{\frac{1}{p}}(x)T(W^{-\frac{1}{p}}f)(x)|
    & =
        \bigg|c \sum_{j=1}^{3^d} \sum_{Q\in \Ss_j}
        \avgint_Q k_Q(x,y)W^{\frac{1}{p}}(x)W^{-\frac{1}{p}}(y)f(y)\,dy
      \cdot \rchi_Q(x)\bigg| \\
    & \leq c \sum_{j=1}^{3^d} \sum_{Q\in \Ss_j} 
      \op{W(x)^{\frac{1}{p}}(\W_Q^p)^{-1}}\avgint_Q
      |\W_Q^pW^{-\frac{1}{p}}(y)f(y)|\,dy
      \cdot \rchi_Q(x).
  \end{align*}
Here, each $k_Q(x,y)$ is a scalar function such that $\|k_Q\|_\infty\leq
1$, and $\W_Q^p$ is the reducing matrix from
Proposition~\ref{prop:reducing-defn} associated to the matrix
$W$ and exponent $p$.  

To estimate the integral on the right-hand side, we apply H\"older's
inequality, the fact that matrices in $\Ss_d$ commute in norm,  and
Proposition~\ref{prop:reducing-defn} to get
\begin{align} \label{eqn:reducing-est}
       \avg{|\W_Q^pW^{-\frac{1}{p}}f|}_Q
       &\leq \avg{\op{\W_Q^pW^{-\frac{1}{p}}}|f|}_Q\\
         &\leq \avg{|f|}_{p,Q}\left(\avgint_Q
           \op{\W_Q^pW^{-\frac{1}{p}}(x)}^{p'}\,dx\right)^{\frac{1}{p'}}
  \notag \\
         &= \avg{|f|}_{p,Q}\left(\avgint_Q
           \op{W^{-\frac{1}{p}}(x)\W_Q^p}^{p'}\,dx\right)^{\frac{1}{p'}}
  \notag \\
  &\lesssim \avg{|f|}_{p,Q} \sum_{i=1}^d \left(\avgint_Q
    |W^{-\frac{1}{p}}(x)\W_Q^pe_i|^{p'}\,dx\right)^{\frac{1}{p'}}
    \notag \\
  &\lesssim \avg{|f|}_{p,Q} \sum_{i=1}^d
    |\overline{\W}_Q^{p'}\W_Q^pe_i|  \notag \\
       &\lesssim \avg{|f|}_{p,Q}\op{\overline{\W}_Q^{p'}\W_Q^p}  \notag
  \\
  & \lesssim \avg{|f|}_{p,Q} [W]_{\A_p}^{\frac{1}{p}}.  \notag
\end{align}
    
If we combine this estimate with the previous one, we see that 
    \[
      |W^{\frac{1}{p}}(x)T(W^{-\frac{1}{p}}f)(x)|
      \lesssim [W]_{\mathcal{A}_p}^\frac{1}{p}\sum_{Q \in \mathcal{S}}
      \op{W^\frac{1}{p}(x)(\W_Q^p)^{-1}}\avg{|f|}_{p,Q}\rchi_Q(x).
    \]
    Therefore, to prove  Theorem~\ref{thm:sio-matrix} it will suffice
    to prove that 
    \[
        \|\mathcal{A}_\mathcal{S}\|_{L^{p} \rightarrow L^{p, \infty}} \lesssim [W]_{A_\infty^\text{sc}},    
    \]
    for the non-negative \textit{scalar} operator $\mathcal{A}_\mathcal{S}$ defined by 
    \[
        \mathcal{A}_\mathcal{S}f(x) := \sum_{Q \in \mathcal{S}}\op{W^\frac{1}{p}(x)(\W_Q^p)^{-1}}\avg{f}_{p,Q}\rchi_Q(x).
      \]
      To prove this, we will show that we can essentially reduce the
      problem to the proof of Theorem~\ref{thm:sio-scalar}.

      Since $W\in \A_p$, by Proposition~\ref{eqn:matrix-scalar-Ap},
      for any vector $v\in \RR^d$,
      $|W ^{\frac{1}{p}} v|^p \in A_\infty$ with constant uniformly bounded by
      $[W]_{A_\infty^{sc}}$.  Therefore, by
      Proposition~\ref{prop:sharp-rhi}, if we let $\nu = 1 +
      \frac{1}{c[W]_{A_\infty}^{sc}}$,  then $|W ^{\frac{1}{p}} v|^p \in RH_\nu$.
      Therefore, we can argue as in~\cite[p.~1524]{MR4269407}:
 \begin{multline} \label{eqn:sharp-rhi-bound}
   \qquad \avgint_Q\op{W^{\frac{1}{p}}(x)(\W_Q^p)^{-1}}^{p\nu}\,dx
   \approx \sum_{i=1}^d \avgint_Q |W^{\frac{1}{p}}(x)(\W_Q^p)^{-1}e_i|^{p\nu}\,dx \\
\lesssim \sum_{i=1}^d \bigg(\avgint_Q
   |W^{\frac{1}{p}}(x)(\W_Q^p)^{-1}e_i|^p\,dx \bigg)^\nu 
     \lesssim \sum_{i=1}^d|\W_Q^p(\W_Q^p)^{-1}e_i|^{p\nu}
    = d.
  \end{multline}
 Now let $E$ and $E'$ be as in the proof of Theorem~\ref{thm:sio-scalar}.  Then we have that 
 \begin{align} \label{eqn:reduce-to-scalar}
      |\langle \mathcal{A}_\mathcal{S}f,\rchi_{E'}\rangle|
   &= \sum_{Q \in \mathcal{S}}\avg{f}_{p,Q}
     \avg{\op{W^\frac{1}{p}(\W_Q^p)^{-1}}\rchi_{E'}}_Q|Q|\\
   &\leq \sum_{Q \in \mathcal{S}}
     \avg{f}_{p,Q}\avg{\rchi_{E'}}_{(p\nu)',Q}
     \avg{\op{W^\frac{1}{p}(\W_Q^p)^{-1}}}_{p\nu,Q}|Q| \notag \\  
   &\lesssim \sum_{Q \in \mathcal{S}}\avg{f}_{p,Q}
     \avg{\rchi_{E'}}_{(p\nu)',Q}|Q|. \notag
    \end{align}
    We can now argue as we did, beginning at
    inequality~\eqref{eqn:key-step}, and with the same notation as
    before, to get 
    \[
        |\langle \mathcal{A}_\mathcal{S}f,\rchi_{E'}\rangle| \lesssim (r')^r|E|^{1 - \frac{1}{p}} \lesssim [W]_{A_\infty^\text{sc}}|E|^{1 - \frac{1}{p}}.
    \]
Therefore, again arguing as before, we conclude that 
    \[
      \|\mathcal{A}_\mathcal{S}\|_{L^p \rightarrow L^{p,\infty}}
      \lesssim [W]_{\mathcal{A}_p}^\frac{1}{p}[W]_{A_\infty^\text{sc}}.
    \]

    \medskip

The proof for the Christ-Goldberg maximal operator is nearly
identical.  By~\cite[Lemma~3.1]{MR4269407} we have that the
Christ-Goldberg maximal operator satisfies an estimate nearly
identical to that for singular integrals:  given a function
  $f\in L^1_c(\rn,\RR^d)$, there exist sparse
  families $\Ss_j$, $1\leq j\leq 3^d$, such that
  \begin{equation} \label{eqn:CG-sparse}
   M_Wf(x)
 \leq c \sum_{j=1}^{3^n} \sum_{Q\in \Ss_j} 
      \op{W(x)^{\frac{1}{p}}(\W_Q^p)^{-1}}\avgint_Q
      |\W_Q^pW^{-\frac{1}{p}}(y)f(y)|\,dy
      \cdot \rchi_Q(x).
    \end{equation}
(We will sketch a different proof of inequality \eqref{eqn:CG-sparse} for the
fractional Christ-Goldberg maximal operator below.) 
    Given this inequality, the proof proceeds exactly as before.
    
\subsection*{Proof of Theorem~\ref{thm:fractional-matrix}}
The proof of Theorem~\ref{thm:fractional-matrix} is very similar to
the proof of Theorem~\ref{thm:sio-matrix}; again the proof involves
reducing to a scalar sparse operator.  We begin by proving a sparse
domination result for the fractional integral operator with matrix
weights.  Parts of the proof are very similar to the proof
in~\cite{CruzUribe:2016ji} of the sparse
domination theorem for the fractional integral itself, so we will only
sketch the changes necessary.

\begin{prop} \label{prop:fractional-matrix-sparse}
  Given $0<\alpha<n$ and $1\leq p<\frac{n}{\alpha}$,
define $q$ by $\frac{1}{p}-\frac{1}{q}=\frac{\alpha}{n}$.   Let $W\in
A_{p,q}$.  Then there exist $3^n$ dyadic grids $\D_j$ such that if
$f\in L^p(\rn,\RR^d)$ has compact support, then for almost every $x\in \rn$,
\begin{equation}  \label{eqn:fms1}
|W(x)I_\alpha(W^{-1}f)(x)|
  \leq
  C(n,\alpha) \sum_{j=1}^{3^n} I_\alpha^{\D_j}(|W(x)W^{-1}f|)(x), 
\end{equation}
where
\[ I_\alpha^{\D_j}(|W(x)W^{-1}f|)(x)
  = \sum_{Q\in \D_j} |Q|^{\frac{\alpha}{n}} \avg{|W(x)W^{-1}f|}_Q\cdot
  \rchi_Q(x). \]
Further, for each such $f$ and any dyadic grid $\D$, there exists a sparse
family $\Ss \subset \D$ (depending on $f$) such that for almost every $x\in \rn$,
\begin{equation}  \label{eqn:fms2}
  I_\alpha^{\D}(|W(x)W^{-1}f|)(x)
  \leq
  [W]_{\mathcal{A}_{p,q}}^{\frac{1}{q}} \sum_{Q\in \Ss}
    \op{W(x)(\V_Q^q)^{-1}} |Q|^{\frac{\alpha}{n}}
    \avg{|f|}_{p,Q}\rchi_Q(x). 
  \end{equation}
\end{prop}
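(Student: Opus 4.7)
My plan is to handle the two inequalities separately. Inequality~\eqref{eqn:fms1} is a pointwise matrix version of the classical three-lattice sparse domination of $I_\alpha$, while \eqref{eqn:fms2} splits into two parts: a separation estimate that uses Proposition~\ref{prop:frac-reducing} to isolate the $W(x)$ factor, and a stopping-time construction that produces the sparse subfamily. For \eqref{eqn:fms1}, fix $x\in\rn$ and set $g_x(y):=|W(x)W^{-1}(y)f(y)|$, treating $W(x)$ as a constant matrix parameter. Pulling the norm inside the integral gives $|W(x)I_\alpha(W^{-1}f)(x)|\leq\int_\rn|x-y|^{\alpha-n}g_x(y)\,dy$. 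A standard dyadic annular decomposition in $|x-y|$ combined with the three-lattice trick then produces $3^n$ dyadic grids $\D_j$ with the property that the right-hand side is dominated by $C\sum_j\sum_{Q\in\D_j,\,x\in Q}|Q|^{\alpha/n}\avg{g_x}_Q$, which is exactly~\eqref{eqn:fms1}. That $g_x$ depends on the base point causes no difficulty, because the entire argument is pointwise in $x$.

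For \eqref{eqn:fms2} I begin with the separation estimate. Since matrices in $\Ss_d$ commute in operator norm and $\op{\,\cdot\,}$ is submultiplicative,
$$\op{W(x)W^{-1}(y)}\leq\op{W(x)(\V_Q^q)^{-1}}\op{\V_Q^q W^{-1}(y)}.$$
Inserting this inside $\avg{|W(x)W^{-1}f|}_Q$ and then applying Hölder's inequality with exponents $p$ and $p'$, the basis expansion $\op{M}\approx\sum_i|Me_i|$, and the reducing-matrix identities of Proposition~\ref{prop:frac-reducing}---the same sequence of steps carried out in \eqref{eqn:reducing-est} for the $\A_p$ case---yields
$$\avg{|W(x)W^{-1}f|}_Q\lesssim\op{W(x)(\V_Q^q)^{-1}}\op{\overline{\V}_Q^{p'}\V_Q^q}\avg{|f|}_{p,Q}\lesssim[W]_{\A_{p,q}}^{1/q}\op{W(x)(\V_Q^q)^{-1}}\avg{|f|}_{p,Q}.$$
Multiplying by $|Q|^{\alpha/n}$ and summing over $Q\in\D$ with $x\in Q$ reduces the problem to showing that the scalar sum $\sum_{Q\in\D,\,x\in Q}\op{W(x)(\V_Q^q)^{-1}}|Q|^{\alpha/n}\avg{|f|}_{p,Q}$ is dominated by the same sum taken over a sparse subfamily $\Ss\subset\D$.

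To construct $\Ss$ I will use the standard principal-cube stopping time on the $L^p$ averages of $f$: start with the maximal cubes carrying the support of $f$, and recursively add each maximal dyadic descendant $P'$ of $P\in\Ss$ with $\avg{|f|}_{p,P'}>2\avg{|f|}_{p,P}$. The Carleson packing property makes $\Ss$ sparse. Writing $\pi(Q)$ for the smallest principal ancestor of $Q$, the averages satisfy $\avg{|f|}_{p,Q}\leq 2\avg{|f|}_{p,\pi(Q)}$, so grouping the full sum into blocks $\{Q:\pi(Q)=P\}$ reduces \eqref{eqn:fms2} to the block estimate $\sum_{\pi(Q)=P,\,x\in Q}\op{W(x)(\V_Q^q)^{-1}}|Q|^{\alpha/n}\lesssim\op{W(x)(\V_P^q)^{-1}}|P|^{\alpha/n}$. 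This is the main obstacle, and the point where the matrix case departs from the scalar argument of~\cite{CruzUribe:2016ji}: the factor $|Q|^{\alpha/n}$ alone yields a convergent geometric series along the dyadic chain descending from $P$, but one must simultaneously compare $\V_Q^q$ with $\V_P^q$ for the nested cubes in the chain. I expect that the required comparison comes directly from the defining averages in Proposition~\ref{prop:frac-reducing}, which express $\V_Q^q$ as a localization of $\V_P^q$ over $Q\subseteq P$, combined with the geometric decay of $|Q|^{\alpha/n}$ to absorb the extra matrix factor. This is the step that needs the most care.
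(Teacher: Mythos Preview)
Your treatment of \eqref{eqn:fms1} is correct and matches the paper: fix $x$, set $g_x(y)=|W(x)W^{-1}(y)f(y)|$, pull the norm inside the integral, and run the scalar three-lattice decomposition on $g_x$.

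For \eqref{eqn:fms2} you have the right separation estimate, but you apply it in the wrong order, and this creates a real obstruction.  The paper first sparsifies the scalar dyadic operator $I_\alpha^{\D}$ applied to the scalar function $g_x$, obtaining a sparse family $\Ss$ with
\[
I_\alpha^{\D}(|W(x)W^{-1}f|)(x)\lesssim \sum_{Q\in\Ss}|Q|^{\alpha/n}\avg{|W(x)W^{-1}f|}_Q\rchi_Q(x),
\]
and only then applies the reducing-operator separation term-by-term to the sparse sum.  The first step is purely scalar (it is the argument of \cite[Proposition~3.6]{CruzUribe:2016ji}, which only needs $g_x\in L^1$ and $\avg{g_x}_Q\to 0$ as $|Q|\to\infty$, both consequences of H\"older and $W\in\A_{p,q}$).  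No comparison between reducing operators on nested cubes ever arises.

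By contrast, you separate first over all of $\D$ and then try to sparsify the sum $\sum_{Q\ni x}\op{W(x)(\V_Q^q)^{-1}}|Q|^{\alpha/n}\avg{|f|}_{p,Q}$ via a stopping time on $\avg{|f|}_{p,Q}$.  The resulting block estimate
\[
\sum_{\substack{Q\ni x,\,Q\subset P\\ \pi(Q)=P}}\op{W(x)(\V_Q^q)^{-1}}|Q|^{\alpha/n}\lesssim \op{W(x)(\V_P^q)^{-1}}|P|^{\alpha/n}
\]
is in general \emph{false}.  Already in the scalar case, take $n\geq 1$, $\alpha<n/2$, any $p\in\bigl(\tfrac{n}{n-\alpha},\tfrac{n}{\alpha}\bigr)$, and $w(y)=|y|^{\delta}$ with $\alpha<\delta<n/p'$; then $w\in A_{p,q}$.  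With $f\equiv 1$ on $P=[-1,1]^n$ there are no stopping descendants, so the block is the full chain below $P$.  For $x$ at distance $2^{-m}$ from the origin one computes that the left side is $\approx 2^{-m\alpha}$ while the right side is $\approx 2^{-m\delta}$, and $\delta>\alpha$ makes the ratio blow up as $m\to\infty$.  The geometric decay of $|Q|^{\alpha/n}$ is simply too slow to absorb the growth of $\op{W(x)(\V_Q^q)^{-1}}$ along the chain, and Proposition~\ref{prop:frac-reducing} gives no uniform comparison of $\V_Q^q$ to $\V_P^q$ for nested $Q\subset P$ that would rescue this.

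The fix is exactly to swap the two steps: sparsify $I_\alpha^{\D}g_x$ as a scalar object first, then apply your separation inequality on the resulting sparse family.
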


\begin{proof}
  The proof of inequality~\eqref{eqn:fms1} is essentially the same as
  the proof for unweighted scalar fractional integrals
  in~\cite[Proposition~3.3]{CruzUribe:2016ji}.   By the definition of
  the fractional integral operator, for $x\in \rn$, 
  \[ |W(x)I_\alpha(W^{-1}f)(x)|
    \leq
    \int_\rn \frac{|W(x)W^{-1}(y)f(y)|}{|x-y|^{n-\alpha}}\,dy.  \]
  We claim that for almost every $x\in \rn$, $|W(x)W^{-1}(\cdot)f(\cdot)|$ is
   integrable.  Fix any large cube $Q$ containing the support of $f$;
  then by H\"older's inequality,
  \[ \int_Q |W(x)W^{-1}(y)f(y)|\,dy 
    \leq
    \bigg(\int_Q \op{W(x)W^{-1}(y)}^{p'}\,dy\bigg)^{\frac{1}{p'}}
    \|f\|_{L^p(\rn)}.  \]
  By the definition of $A_{p,q}$, the integral on the righthand side
  is finite for almost every $x\in Q$.  Since this is true for any
  increasing sequence of such cubes, we get that $|W(x)W^{-1}(\cdot)f(\cdot)|$  is in
  $L^1$ for almost every $x$.  Given this, we can now apply the
  decomposition argument in ~\cite[Proposition~3.3]{CruzUribe:2016ji}
  and we get the desired estimate.

  We will now prove inequality~\eqref{eqn:fms2} in two steps.  First,
  we will show that given $\D$ and $f$, there exists a sparse set
  $\Ss$ such that
  \[  I_\alpha^{\D}(|W(x)W^{-1}f|)(x)
    \leq
    C(n,\alpha)\sum_{Q\in \Ss} |Q|^{\frac{\alpha}{n}} \avg{|W(x)W^{-1}f|}_Q\cdot
    \rchi_Q(x).  \]
  The proof is essentially the same as the proof of
  ~\cite[Proposition~3.6]{CruzUribe:2016ji}.  To repeat this argument,
  we need to show that for almost every $x\in \rn$,
  \[ \avgint_Q |W(x)W^{-1}(y)f(y)|\,dy \rightarrow 0 \]
  as $|Q| \rightarrow \infty$.   But, since $
  |W(x)W^{-1}(\cdot)f(\cdot)|\in L^1(\R^n)$, this follows
  immediately.

  The second step to prove  inequality~\eqref{eqn:fms2} is to repeat
  the argument used to prove~\eqref{eqn:reducing-est}.   Let $\V_Q^p$
  and $\overline{\V_Q^{p'}}$ be the reducing operators associated to
  $W$.  Then by Proposition~\ref{prop:frac-reducing}, we have that for
  each cube $Q$,
  \begin{align*}
    \avg{|W(x)W^{-1}f|}_Q
    & \leq
    \op{W(x)(\V_Q^q)^{-1}} \avgint_Q |\V_Q^q W^{-1}(y)f(y)|\,dy \\
    &  \leq \op{W(x)(\V_Q^q)^{-1} }\avg{|f|}_{p,Q}
        \bigg(\avgint_Q \op{\V_Q^q
          W^{-1}(y)}^{p'}\,dy\bigg)^{\frac{1}{p'}} \\
     &   \lesssim \op{W(x)(\V_Q^q)^{-1}} \avg{|f|}_{p,Q}
          \op{\overline{\V}_Q^{p'}\V_Q^q} \\
      &    \lesssim [W]_{\mathcal{A}_{p,q}}^{\frac{1}{q}} \op{W(x)(\V_Q^q)^{-1}}\avg{|f|}_{p,Q}. 
  \end{align*}
  If we now combine these estimates, we get~\eqref{eqn:fms2}.
\end{proof}


We can now prove Theorem~\ref{thm:fractional-matrix} for the
fractional integral operator.  The proof is very similar to the proof
of Theorem~\ref{thm:sio-matrix} and we sketch the changes.   By
Proposition~\ref{prop:fractional-matrix-sparse} it will suffice to
prove that
    \[
      \|\mathcal{A}_\mathcal{S}\|_{L^{p} \rightarrow L^{p, \infty}}
      \lesssim [W^q]_{A_\infty^\text{sc}},    
    \]
    for the non-negative scalar operator $\mathcal{A}^\alpha_\mathcal{S}$ defined by 
    \[
        \mathcal{A}^\alpha_\mathcal{S}f(x) := \sum_{Q \in \mathcal{S}}\op{W^\frac{1}{p}(x)(\V_Q^q)^{-1}}\avg{f}_{p,Q}\rchi_Q(x).
      \]

 By Proposition~\ref{prop:norm-scalar-Apq}, for any vector $v\in
 \RR^d$, $|Wv|^q \in A_{p,q}$ and so in $A_\infty$.  Therefore, by
 Proposition~\ref{prop:sharp-rhi} it satisfies a reverse H\"older
 inequality and we can argue as we did in~\eqref{eqn:sharp-rhi-bound}
 to get that for any cube $Q$,
\[  \avgint_Q \op{W(x)(\V_Q^q)}^{q\nu}\,dx \lesssim d. \]
Given this, we can repeat the argument used to
prove~\eqref{eqn:reduce-to-scalar} to show that we need to bound (in
the notation used in the proof of Theorems~\ref{thm:sio-scalar}
and~\ref{thm:sio-matrix})
\[ \sum_{Q \in
    \mathcal{S}}\avg{f}_{p,Q}\avg{\rchi_{E'}}_{(q\nu)',Q}|Q|. \]
Given this the argument now proceeds exactly as before to complete the
proof.

\medskip

The proof for the fractional Christ-Goldberg maximal operator is
essentially the same.  We can adapt the proof of the sparse bounds for
the fractional maximal operator
in~\cite[Propositions~3.2,~3.5]{CruzUribe:2016ji} using the same ideas
as in the proof of Proposition~\ref{prop:fractional-matrix-sparse} to
show that if $f\in L^p(\rn,\RR^d)$, then there exist $3^n$ dyadic
grids $\D_j$ and sparse families $\Ss_j \subset \D_j$, such that
\[ M_{W,\alpha} f(x) \lesssim [W]_{\mathcal{A}_{p,q}}^{\frac{1}{q}}
  \sum_{j=1}^{3^n} \sum_{Q\in \Ss_j}
  \op{W(x)(\V_Q^q)^{-1}} |Q|^{\frac{\alpha}{n}}
  \avg{|f|}_{p,Q} \rchi_Q(x). \]
Given this estimate, the proof now proceeds as before.

\begin{remark}
  When $\alpha=0$, this argument also yields a different proof of the
  sparse bound~\eqref{eqn:CG-sparse}. 
\end{remark}

\appendix

\section{Proof of Theorem~\ref{ex:lower-bound}}
\label{section:example}

In this section we sketch the proof of a lower bound for the constant
in Theorem~\ref{thm:p=1-scalar}.  This bound is worse than the
sharp result found by~\cite{LLOR23}, but we believe it provides some
insight into the behavior of weights in multiplier weak-type inequalities.  

\begin{theorem} \label{ex:lower-bound}
  When $n=1$, given $w\in A_1$,  for the Hilbert transform we must
  have that
  \[ \| wH(\cdot\, w^{-1}) \|_{L^1 \rightarrow
      L^{1,\infty}} \gtrsim [w]_{A_1}^{\frac{1}{2}}. \]
\end{theorem}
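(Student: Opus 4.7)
The plan is to exhibit an explicit one-parameter family of scalar $A_1$ weights whose $A_1$ characteristics diverge, paired with specific test functions, and then to read off the desired lower bound directly from the Lebesgue distribution function of the product $w\cdot H(fw^{-1})$. A convenient candidate is the power weight $w_\epsilon(x)=|x|^{-1+\epsilon}$ on $\mathbb{R}$, for which a routine computation shows $[w_\epsilon]_{A_1}\asymp 1/\epsilon$ as $\epsilon\to 0^+$. A second natural candidate is a piecewise-constant step weight of the form $w=c\,\rchi_I+\rchi_{\mathbb{R}\setminus I}$ for a fixed bounded interval $I$, where one can check $[w]_{A_1}\asymp c$. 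In either case the test function $f$ will be chosen so that $g:=fw^{-1}$ is the characteristic function of an interval on which $Hg$ admits a closed-form logarithmic expression.

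Given such $f$ and $g$, the core computation is to estimate the distribution function $\lambda\mapsto |\{x:w(x)|Hg(x)|>\lambda\}|$ and compare it to $\|f\|_{L^1}=\int|g|w\,dx$. For instance, for the power-weight family with $g=\rchi_{[1,2]}$ one has the explicit formula $Hg(x)=\pi^{-1}\log|(x-1)/(x-2)|$, which is bounded below in absolute value by a positive constant on a neighbourhood of the origin; multiplying by $w_\epsilon$ yields a function comparable to $|x|^{-1+\epsilon}$ on that neighbourhood, whose distribution function is easy to write down. Choosing a threshold $\lambda=\lambda(\epsilon)$ to maximize $\lambda\cdot|\{x: w_\epsilon(x)|Hg(x)|>\lambda\}|$, dividing by $\|f\|_{L^1}$, and re-expressing in terms of $[w_\epsilon]_{A_1}$ produces the stated $[w]_{A_1}^{1/2}$ lower bound. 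The analogous argument for the step weight uses the same closed-form expression for $H\rchi_J$ with $J$ a component of the complement of $I$, and the amplification comes from the jump of $w$ across $\partial I$.

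The main obstacle is calibrating the example to yield \emph{exactly} the half-power $[w]_{A_1}^{1/2}$: naive choices of the test function or of the threshold $\lambda$ tend to produce either a trivial $O(1)$ bound or, at the other extreme, potentially stronger exponents, while less natural choices quickly make the pointwise analysis of $Hg$ intractable. Pinning down the half-power therefore requires balancing the amplification supplied by the weight on the region where $|Hg|$ is essentially a nonzero constant against the growth of $\|f\|_{L^1}=\int|g|w\,dx$ that the weight itself contributes. The remainder of the argument is elementary pointwise estimation of the logarithmic kernel together with careful book-keeping of the distribution function and its optimization over $\lambda$.
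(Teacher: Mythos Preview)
Your two candidate families behave very differently, and the proposal does not distinguish between them.

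\textbf{The power weight fails outright.} With $w_\epsilon(x)=|x|^{-1+\epsilon}$ and $g=\rchi_{[1,2]}$ you have $\|f\|_{L^1}=\int_1^2 w_\epsilon\,dx\asymp 1$ uniformly in $\epsilon$, while on $|x|<\tfrac12$ the product $w_\epsilon|Hg|$ is comparable to $|x|^{-1+\epsilon}$. Hence
\[
\lambda\,\bigl|\{x:w_\epsilon|Hg|>\lambda\}\bigr|\ \asymp\ \lambda\cdot\lambda^{-1/(1-\epsilon)}=\lambda^{-\epsilon/(1-\epsilon)},
\]
which is \emph{decreasing} in $\lambda$ on the relevant range and bounded by an absolute constant at the left endpoint. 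No choice of $\lambda$ rescues this: the supremum is $O(1)$ uniformly in $\epsilon$. The paper explains the conceptual reason: Muckenhoupt and Wheeden proved that the limiting weight $|x|^{-1}$ (which is \emph{not} in $A_1$) nonetheless satisfies the multiplier weak $(1,1)$ inequality with a finite constant, so the approximants $w_\epsilon$ cannot force the constant to blow up. This is not a calibration issue that careful book-keeping can fix; the power family is simply the wrong family.

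\textbf{The step weight, by contrast, works and is stronger than the paper's example.} With $w=c\,\rchi_{[-1,1]}+\rchi_{\mathbb{R}\setminus[-1,1]}$ one has $[w]_{A_1}=c$, and taking $f=\rchi_{[2,3]}$ gives $fw^{-1}=\rchi_{[2,3]}$, so $H(fw^{-1})$ is a fixed function independent of $c$ with $|H\rchi_{[2,3]}|\geq c_1>0$ on $[-1,1]$. Then $w|H(fw^{-1})|\geq cc_1$ on a set of measure $2$, while $\|f\|_{L^1}=1$, yielding $C_0\gtrsim c=[w]_{A_1}$. This already implies the stated $[w]_{A_1}^{1/2}$ lower bound and in fact improves it to a linear one.

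\textbf{Comparison with the paper.} The paper does not use either of your families. Motivated precisely by the failure of the pure power weight, it introduces a logarithmically corrected weight
\[
w_\delta(x)=\frac{\log(e/|x|)}{|x|^{1-\delta}}\quad(0<|x|\le 1),\qquad w_\delta(x)=1\quad(|x|>1),
\]
for which $[w_\delta]_{A_1}\asymp\delta^{-2}$, and with $f=\rchi_{[1,2]}$ obtains $C_0\gtrsim\delta^{-1}\asymp[w_\delta]_{A_1}^{1/2}$ by optimizing over $\lambda$ at $\lambda=e^{1/\delta}$. The construction is designed so that the $\delta\to 0$ limit is the weight $\mu(x)=\log(e/|x|)/|x|$, which the authors first verify \emph{fails} the multiplier weak-type necessary condition of Muckenhoupt--Wheeden; this is what guarantees genuine blow-up along the family. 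Your step-weight argument is more elementary and yields a better exponent; the paper's example, however, is closer in spirit to the sharp $[w]_{A_1}^2$ examples of Lerner--Li--Ombrosi--Rivera-R\'\i os and illustrates why singularities, not jumps, are the natural mechanism here.
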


To prove this result we will construct an explicit weight $w\in A_1$ and
function $f$ such that in the inequality
\begin{equation}\label{eqn:lower-bound}
  \lambda|\{x\in [0,\infty) : w(x)|H(fw^{-1})(x)|>\lambda\}|
  \leq C_0 \int_\RR |f(x)|\,dx.
\end{equation}
the constant satisfies $C_0 \gtrsim [w]_{A_1}^{\frac{1}{2}}$.  
To motivate our construction, we make some observations.  When $p=1$,
a lower bound for the Hilbert transform in
the weak $(1,1)$ inequality of the form~\eqref{eqn:weak-measure} is
gotten by considering the $A_1$ weights $w_\delta(x)=|x|^{\delta-1}$.  It
is straightforward to show that $[w_\delta]_{A_1} \approx
\delta^{-1}$, and to find a function $f$ such that the lower bound on
the weak $(1,1)$ constant is comparable to $\delta^{-1}$.  (See
Buckley~\cite{buckley93}.  The actual sharp constant in the weak
$(1,1)$ inequality is comparable to $[w]_{A_1}\log(e+[w]_{A_1})$:  see~\cite{MR4150264}.)

This example, however, does not work for the multiplier weak $(1,1)$
inequality.  Since Muckenhoupt and Wheeden~\cite{MR447956} showed that
the weight $w_0=|x|^{-1}$ is a good weight for this inequality, and
the weights $w_\delta$ converge to $w_0$, we do not get any dependence
on $\delta$ when we estimate the multiplier weak $(1,1)$ inequality
from below.

Therefore, to build the desired example, we need to start with a
weight that does not work for the multiplier weak $(1,1)$ inequality
and then modify this .  Define the function $\mu$ on $\RR$ by
\[ \mu(x) = \begin{cases}
    \frac{\log(\frac{e}{|x|})}{|x|}, & 0<|x|\leq 1, \\
      1, & |x|>1. 
    \end{cases}
  \]
By~\cite[Equation (5.5)]{MR447956}, a necessary condition for the weak
$(1,1)$ inequality for the Hilbert transform to hold with weight
$\mu$ is that for every
interval $Q\subset \RR$,
\[ |Q|^{-1} \|\mu\rchi_Q\|_{L^{1,\infty}} \leq C \essinf_{x\in Q}
  \mu(x). \]
For $0<t<1$ and $Q=[0,t]$, this condition implies that for all
$\lambda>0$,
\begin{equation} \label{eqn:omega-MW}
  \frac{\lambda}{t} \big|\{ x\in [0,t] :  \mu (x) >\lambda \}|
  \leq C\mu(t). 
\end{equation}

To show that this inequality does not hold, we will first approximate
the inverse of $\mu$ close to the origin.  For $0<x<1$, let
\[ \nu(x) = \frac{\log(ex)}{x}. \]
Then 
\[ \nu(\mu(x)) = x \, \frac{\log(\frac{e}{x})+
    \log\log(\frac{e}{x})}{\log(\frac{e}{x})}, \]

and so $x \leq \nu(\mu(x)) \leq 2x$.  
Hence, for any $\lambda>0$,
\[  \frac{\lambda}{t} \big|\{ x\in [0,t] : \mu(x) > \lambda \}|
\geq
  \frac{\lambda}{t} \big|\{ x\in [0,t] : 2x <  \nu(\lambda) \}|
  =
  \frac{\log(e\lambda)}{2t}.  \]
Thus,~\eqref{eqn:omega-MW} implies that there exists $C>0$
such that for all $0<t<1$
 and all $\lambda>0$,
\[ \frac{\log(e\lambda)}{2t} \leq
  C\frac{\log(\frac{e}{t})}{t}. \]
However, for any fixed $C$ and $t$, this inequality does not hold as
$\lambda\rightarrow \infty$.  

To construct our example, fix  $0<\delta<\tfrac{1}{2}$ and define $w_{\delta}$ by
\[ w_{\delta}(x) = \begin{cases}
    \frac{\log(\frac{e}{|x|})}{|x|^{1-\delta}}, & 0<|x|\leq 1, \\
    1, & |x|> 1. 
    \end{cases}
  \]
   We claim that $w_\delta\in A_1$ and
  $[w_\delta]_{A_1} \approx \frac{1}{\delta^{2}}$.  To
  prove this, since $w_\delta$ is symmetric and decreasing on $[0,\infty)$,
  using the arguments in~\cite{MR1406495} it will suffice to check the
  $A_1$ condition on intervals of the form $[0,t]$, $t>0$.
By integration by parts, we know that for all $t>0$,
\[ \frac{1}{t}\int_0^t \frac{\log(\frac{e}{x})}{x^{1-\delta}}\,dx =
    \frac{1}{\delta}\frac{\log\big(\frac{e}{t}\big)}{t^{1-\delta}} +
    \frac{1}{\delta^2}\frac{1}{t^{1-\delta}}. \]
If $t\leq 1$, then
  \[ \frac{1}{t}\int_0^t w_\delta(x)\,dx 
    \leq  \bigg(\frac{1}{\delta}+\frac{1}{\delta^{2}}\bigg)
    \frac{\log\big(\frac{e}{t}\big)}{t^{1-\delta}}
  \leq
  \frac{2}{\delta^{2}}\frac{\log\big(\frac{e}{t}\big)}{t^{1-\delta}}
  =  \frac{2}{\delta^{2}} w_\delta(t) . \]

On the other hand, if $t>1$, then
\begin{equation*}
  \frac{1}{t}\int_0^t w_\delta(x)\,dx
   = \frac{1}{t}\int_0^1 w_\delta(x)\,dx + \frac{1}{t}\int_1^t w_\delta(x)\,dx 
 = \frac{1}{t}\bigg(\frac{1}{\delta}+\frac{1}{\delta^{2}}\bigg)+ \frac{t-1}{t}
  \leq \frac{2}{\delta^{2}}w_\delta(t).
\end{equation*}
When $t=1$ the integral is equal to
$\frac{1}{\delta}+\frac{1}{\delta^{2}}$.  
Therefore, $w_\delta\in A_1$ and  $[w_\delta]_{A_1} \approx
\frac{1}{\delta^{2}}$.

\medskip

To get a lower bound on the  constant $C_0$
in~\eqref{eqn:lower-bound},  let $f(x)=\rchi_{[1,2]}(x)$.  Then for $0<x<\frac{1}{2}$,
\[ |H(fw_\delta^{-1})(x)|
  =
  \bigg|\int_1^2 \frac{w_\delta(y)^{-1}}{x-y}\,dy \bigg|
  =
  \int_1^2 \frac{dy}{y-x} > \frac{1}{2}. \]

Moreover, on the
interval $[0,\frac{1}{2}]$ we have that
\[ \frac{w_\delta(x)}{\mu(x^{1+\delta})} = \frac{\log(\frac{e}{x})}{\log(\frac{e}{x^{1-\delta}})}
  =
  \frac{1 + \log(\frac{1}{x})}{1 + (1-\delta)\log(\frac{1}{x})}
  \geq 1.\]
Therefore, for all $\lambda>1$,
\begin{multline*}
  \lambda|\{x\in [0,\infty) : w_\delta(x)|H(fw_\delta^{-1})(x)|>\lambda\}| 
  \geq
  \lambda |\{x\in [0, \tfrac{1}{2}] : w_\delta(x) > 2\lambda \}|\\
 \geq
  \lambda |\{x\in [0, \tfrac{1}{2}] :   \mu(x^{1-\delta})> 2\lambda \}|
 = \lambda \left(\frac{1}{2}\nu(2\lambda)\right)
                                                                                             ^{\frac{1}{1-\delta}}
 \geq
    \frac{1}{4}\lambda^{1-\frac{1}{1-\delta}}\log(\lambda)^{\frac{1}{1-\delta}}. 
  \end{multline*}
By a simple calculus argument, the function $F(\lambda) =
\lambda^{1-\frac{1}{1-\delta}}\log(\lambda)^{\frac{1}{1-\delta}}$ has
a unique local maximum when $\log(\lambda)=\frac{1}{\delta}$, so
$\lambda=e^{\frac{1}{\delta}}$. At this value
of $\lambda$,
\[
  \lambda^{1-\frac{1}{1-\delta}}\log(\lambda)^{\frac{1}{1-\delta}}
  = \exp\bigg(\frac{1}{\delta}\bigg(1-\frac{1}{1-\delta}\bigg)\bigg)
  \delta^{\frac{-1}{1-\delta}}
  = \exp\left(\frac{-1}{1-\delta}\right) \delta^{\frac{-\delta}{1-\delta}} \frac{1}{\delta}
  \approx \frac{1}{\delta}. \]
If we combine these estimates we see that  in~\eqref{eqn:lower-bound}, we must have that
\[ C_0 \gtrsim \frac{1}{\delta} \approx [w_\delta]_{A_1}^{\frac{1}{2}}. \]

\bibliographystyle{plain}
\bibliography{weighted-weak-type}

\begin{thebibliography}{10}

\bibitem{MR4002540}
F.~Berra.
\newblock Mixed weak estimates of {S}awyer type for generalized maximal
  operators.
\newblock {\em Proc. Amer. Math. Soc.}, 147(10):4259--4273, 2019.

\bibitem{mb-dcu-preprint}
M.~Bownik and D.~Cruz-Uribe.
\newblock Extrapolation and factorization of matrix weights.
\newblock {\em preprint}, 2022.

\bibitem{buckley93}
S.~M. Buckley.
\newblock Estimates for operator norms on weighted spaces and reverse {J}ensen
  inequalities.
\newblock {\em Trans. Amer. Math. Soc.}, 340(1):253--272, 1993.

\bibitem{MR3521084}
J.~Conde-Alonso and G.~Rey.
\newblock A pointwise estimate for positive dyadic shifts and some
  applications.
\newblock {\em Math. Ann.}, 365(3-4):1111--1135, 2016.

\bibitem{MR1406495}
D.~Cruz-Uribe.
\newblock Piecewise monotonic doubling measures.
\newblock {\em Rocky Mountain J. Math.}, 26(2):545--583, 1996.

\bibitem{CruzUribe:2016ji}
D.~Cruz-Uribe.
\newblock {Two weight inequalities for fractional integral operators and
  commutators}.
\newblock In F.~J. Martin-Reyes, editor, {\em VI International Course of
  Mathematical Analysis in Andalusia}, pages 25--85. World Scientific, 2016.

\bibitem{dcu-paseky}
D.~Cruz-Uribe.
\newblock Extrapolation and factorization.
\newblock In J.~Lukes and L.~Pick, editors, {\em Function spaces, embeddings
  and extrapolation X, Paseky 2017}, pages 45--92. Matfyzpress, Charles
  University, 2017.
\newblock arXiv:1706.02620.

\bibitem{MR3803292}
D.~Cruz-Uribe, J.~Isralowitz, and K.~Moen.
\newblock Two weight bump conditions for matrix weights.
\newblock {\em Integral Equations Operator Theory}, 90(3):Art. 36, 31, 2018.

\bibitem{MR4269407}
D.~Cruz-Uribe, J.~Isralowitz, K.~Moen, S.~Pott, and I.~Rivera-R\'{\i}os.
\newblock Weak endpoint bounds for matrix weights.
\newblock {\em Rev. Mat. Iberoam.}, 37(4):1513--1538, 2021.

\bibitem{MR2172941}
D.~Cruz-Uribe, J.~M. Martell, and C.~P\'{e}rez.
\newblock Weighted weak-type inequalities and a conjecture of {S}awyer.
\newblock {\em Int. Math. Res. Not.}, (30):1849--1871, 2005.

\bibitem{MR3544941}
D.~Cruz-Uribe, K.~Moen, and S.~Rodney.
\newblock Matrix {$\mathcal A_p$} weights, degenerate {S}obolev spaces, and
  mappings of finite distortion.
\newblock {\em J. Geom. Anal.}, 26(4):2797--2830, 2016.

\bibitem{duoandikoetxea01}
J.~Duoandikoetxea.
\newblock {\em Fourier {A}nalysis}, volume~29 of {\em Graduate Studies in
  Mathematics}.
\newblock American Mathematical Society, Providence, RI, 2001.

\bibitem{MR2015733}
M.~Goldberg.
\newblock Matrix {$A_p$} weights via maximal functions.
\newblock {\em Pacific J. Math.}, 211(2):201--220, 2003.

\bibitem{grafakos08a}
L.~Grafakos.
\newblock {\em Classical Fourier Analysis}, volume 249 of {\em Graduate Texts
  in Mathematics}.
\newblock Springer, New York, 2nd edition, 2008.

\bibitem{MR2912709}
T.~Hyt\"{o}nen.
\newblock The sharp weighted bound for general {C}alder\'{o}n-{Z}ygmund
  operators.
\newblock {\em Ann. of Math. (2)}, 175(3):1473--1506, 2012.

\bibitem{MR3129101}
T.~Hyt\"{o}nen and M.~T. Lacey.
\newblock The {$A_p$}-{$A_\infty$} inequality for general
  {C}alder\'{o}n-{Z}ygmund operators.
\newblock {\em Indiana Univ. Math. J.}, 61(6):2041--2092, 2012.

\bibitem{MR3092729}
T.~Hyt\"{o}nen and C.~P\'{e}rez.
\newblock Sharp weighted bounds involving {$A_\infty$}.
\newblock {\em Anal. PDE}, 6(4):777--818, 2013.

\bibitem{MR2993026}
T.~P. Hyt\"{o}nen, M.~T. Lacey, H.~Martikainen, T.~Orponen, M.~Reguera, E.~T.
  Sawyer, and I.~Uriarte-Tuero.
\newblock Weak and strong type estimates for maximal truncations of
  {C}alder\'{o}n-{Z}ygmund operators on {$A_p$} weighted spaces.
\newblock {\em J. Anal. Math.}, 118(1):177--220, 2012.

\bibitem{MR4030471}
J.~Isralowitz and K.~Moen.
\newblock Matrix weighted {P}oincar\'{e} inequalities and applications to
  degenerate elliptic systems.
\newblock {\em Indiana Univ. Math. J.}, 68(5):1327--1377, 2019.

\bibitem{MR2652182}
M.~T. Lacey, K.~Moen, C.~P\'{e}rez, and R.~H. Torres.
\newblock Sharp weighted bounds for fractional integral operators.
\newblock {\em J. Funct. Anal.}, 259(5):1073--1097, 2010.

\bibitem{MR3127380}
A.~Lerner.
\newblock On an estimate of {C}alder\'{o}n-{Z}ygmund operators by dyadic
  positive operators.
\newblock {\em J. Anal. Math.}, 121:141--161, 2013.

\bibitem{lerner-IMRN2012}
A.~Lerner.
\newblock A simple proof of the {$A_2$} conjecture.
\newblock {\em Int. Math. Res. Not.}, 23(3):3159--3170, 2013.

\bibitem{LLOR23}
A.~Lerner, K.~Li, S.~Ombrosi, and I.~Rivera-R\'{\i}os.
\newblock On the sharpness of some matrix weighted endpoint estimates.
\newblock {\em preprint}, 2023.
\newblock arXiv:2310.06718v1.

\bibitem{MR4007575}
A.~Lerner and F.~Nazarov.
\newblock Intuitive dyadic calculus: the basics.
\newblock {\em Expo. Math.}, 37(3):225--265, 2019.

\bibitem{MR4150264}
A.~Lerner, F.~Nazarov, and S.~Ombrosi.
\newblock On the sharp upper bound related to the weak {M}uckenhoupt-{W}heeden
  conjecture.
\newblock {\em Anal. PDE}, 13(6):1939--1954, 2020.

\bibitem{MR3850676}
K.~Li, S.~Ombrosi, and M.~Bel\'{e}n~Picardi.
\newblock Weighted mixed weak-type inequalities for multilinear operators.
\newblock {\em Studia Math.}, 244(2):203--215, 2019.

\bibitem{MR3961329}
K.~Li, S.~Ombrosi, and C.~P\'{e}rez.
\newblock Proof of an extension of {E}. {S}awyer's conjecture about weighted
  mixed weak-type estimates.
\newblock {\em Math. Ann.}, 374(1-2):907--929, 2019.

\bibitem{MR3000426}
K.~Moen.
\newblock Sharp weighted bounds without testing or extrapolation.
\newblock {\em Arch. Math. (Basel)}, 99(5):457--466, 2012.

\bibitem{MR447956}
B.~Muckenhoupt and R.~L. Wheeden.
\newblock Some weighted weak-type inequalities for the {H}ardy-{L}ittlewood
  maximal function and the {H}ilbert transform.
\newblock {\em Indiana Univ. Math. J.}, 26(5):801--816, 1977.

\bibitem{MR3689742}
F.~Nazarov, S.~Petermichl, S.~Treil, and A.~Volberg.
\newblock Convex body domination and weighted estimates with matrix weights.
\newblock {\em Adv. Math.}, 318:279--306, 2017.

\bibitem{MR3498179}
S.~Ombrosi, C.~P\'{e}rez, and J.~Recchi.
\newblock Quantitative weighted mixed weak-type inequalities for classical
  operators.
\newblock {\em Indiana Univ. Math. J.}, 65(2):615--640, 2016.

\bibitem{MR1928089}
S.~Roudenko.
\newblock Matrix-weighted {B}esov spaces.
\newblock {\em Trans. Amer. Math. Soc.}, 355(1):273--314 (electronic), 2003.

\bibitem{MR776188}
E.~T. Sawyer.
\newblock A weighted weak type inequality for the maximal function.
\newblock {\em Proc. Amer. Math. Soc.}, 93(4):610--614, 1985.

\bibitem{brandon-23}
B.~Sweeting.
\newblock A characterization of weights for a weak-type estimate of the maximal
  operator.
\newblock {\em preprint}, 2023.

\end{thebibliography}

\end{document}